\PassOptionsToPackage{dvipsnames}{xcolor}

\documentclass[onefignum,onetabnum]{siamart220329}

\usepackage{iftex}
\ifPDFTeX 
  \usepackage[utf8]{inputenc} 
  \usepackage[T1]{fontenc} 
  \usepackage{lmodern} 
\else
  \usepackage{fontspec} 
\fi

\usepackage[english]{babel} 
\usepackage{hyperref}

\usepackage{cite}                           
\usepackage{subfigure}

\usepackage{tikz}
\usepackage{pgfplots}
\pgfplotsset{compat=newest,compat/show suggested version=false}
\usetikzlibrary{pgfplots.groupplots}

\renewcommand{\phi}{p}
\def\R{\mathbb R}
\def\P{\mathcal P}
\def\proj{\mathcal P}
\def\f{\hat f}
\def\h{\hat h}
\def\LL{\mathcal L}
\DeclareMathOperator{\spann}{span}

\def\be{\begin{equation}}
\def\ee{\end{equation}}
\def\bea{\begin{eqnarray}}
\def\eea{\end{eqnarray}}
\def\beas{\begin{eqnarray*}}
\def\eeas{\end{eqnarray*}}

\setlength{\oddsidemargin}{1.5cm}
\setlength{\evensidemargin}{1.5cm}
\usepackage{lipsum}
\usepackage{amsfonts}
\usepackage{graphicx}
\usepackage{epstopdf}
\usepackage{algorithmic}
\ifpdf
  \DeclareGraphicsExtensions{.eps,.pdf,.png,.jpg}
\else
  \DeclareGraphicsExtensions{.eps}
\fi


\newsiamremark{remark}{Remark}
\newsiamremark{hypothesis}{Hypothesis}
\crefname{hypothesis}{Hypothesis}{Hypotheses}
\newsiamthm{claim}{Claim}

\usepackage{amsopn}

\title{Conservative polynomial approximations and applications to Fokker-Planck equations}
\author{Tino Laidin\thanks{Univ. Lille, CNRS, Inria, UMR 8524 - Laboratoire Paul Painlevé, F-59000 Lille, France (\email{tino.laidin@univ-lille.fr}).}\and Lorenzo Pareschi\thanks{Maxwell Institute for Mathematical Sciences and Department of Mathematics, Heriot-Watt University, Edinburgh, United Kingdom (\email{l.pareschi@hw.ac.uk}) and Department of Mathematics and Computer Science, University of Ferrara, Italy (\email{lorenzo.pareschi@unife.it}).}}

\begin{document}

\maketitle

\begin{abstract}
 We address the problem of constructing approximations based on orthogonal polynomials that preserve an arbitrary set of moments of a given function without loosing the spectral convergence property. To this aim, we compute the constrained polynomial of best approximation for a generic basis of orthogonal polynomials. The construction is entirely general and allows us to derive structure preserving numerical methods for partial differential equations that require the conservation of some moments of the solution, typically representing relevant physical quantities of the problem. These properties are essential to capture with high accuracy the long-time behavior of the solution. We illustrate with the aid of several numerical applications to Fokker-Planck equations the generality and the performances of the present approach.   

   \vspace{.5em}
   \textsc{Keywords:} Fokker-Planck equation, Galerkin spectral method, conservative methods, spectral accuracy.\\\vspace{.5em}
   \textsc{2020 Mathematics Subject Classification: }65N35 (Primary), 
   82C40 (Secondary). 
\end{abstract}

\tableofcontents

\section{Introduction}
Computational techniques that maintain certain fundamental properties or structures of the underlying mathematical problem in their discrete approximations play a crucial role in the study and analysis of ODEs and PDEs, offering insights into complex physical phenomena that are often inaccessible through analytical means alone^^>\cite{Gosse2013,HWL2006,Jin2022}. These structure-preserving properties could include symmetries, conservation laws, or other structural characteristics that are crucial for accurately representing the behavior of the system being modeled. 

Capturing the long-time behavior of solutions to PDEs is also intricately linked to the structure-preserving properties of numerical methods. In fact, preserving key structural properties at a discrete level, such as conservation laws and invariant quantities, ensures that the numerical approximation retains the essential characteristics of the original problem over extended time intervals. 

In this regard, the preservation of moments of the solution stands out as a fundamental aspect, especially when considering kinetic and mean-field equations, where the moments represent the macroscopic observable physical quantities^^>\cite{DimarcoPareschi2015,PareschiZanella2018,BessemoulinFilbet2021,Kraus2017}. In kinetic equations, for example, the long time behavior of the systems is fully determined by the knowledge of some invariant moments. Achieving such properties, however, is particularly challenging due to the inherent complexity of many physical systems and the necessity to accurately capture these dynamics.

A substantial body of literature has been devoted to the development of structure-preserving methods for various types of PDEs, with a focus on preserving specific physical properties. For instance, in the context of Fokker-Planck equations, numerous studies have explored different numerical strategies aimed at maintaining conservation of mass, momentum and energy with the aim to describe accurately the steady state solution of the problem^^>\cite{PareschiZanella2018, Bailo2020, Buet2010, Larsen1985, BorziMohammadi2015, Chang1970}.

Spectral methods, which rely on expansions in terms of orthogonal polynomials, have garnered significant attention for their effectiveness in solving collisional kinetic equations of Boltzmann-type. The spectral accuracy and efficiency of these methods make them particularly well-suited for capturing the non-equilibrium dynamics of such systems^^>\cite{Pareschi2000, Pareschi2000a, Pareschi2000b, Hu2019, MouhotPareschi2006, Mouhot2013}. 

However, due to the lack of conservations, the long time behavior of such methods may lead to accumulation of errors and  their extension to a conservative setting has represented a challenging task in numerical simulations. Recent advancements in conservative spectral methods based on $L^2$-minimization frameworks have shown promise in addressing these challenges^^>\cite{PareschiRey2022, Gamba2010, Gamba2017,Alonso2018}. 

Building upon these developments, our main goal in this manuscript is to extend the $L^2$-minimization framework to derive families of orthogonal polynomials capable of preserving the moments of the the solution, thereby enabling the construction of spectrally accurate and efficient structure-preserving numerical methods. This extension leverages the well-established theory of orthogonal polynomials^^>\cite{canuto1988, funaro1992, Gautschi2004, KoekoekLeskySwarttouw2010, GottliebOrszag1977}, offering a robust framework for achieving high-accuracy approximations of PDEs while preserving key physical properties.
In particular, we have developed a general framework for constructing constrained orthogonal polynomial approximations which maintain the accuracy properties of the unconstrained polynomials for smooth solutions. This framework enabled us to construct spectrally accurate moment-preserving Galerkin-type approximations for several Fokker-Planck equations originating from various fields, ranging from classical physics to social sciences and operations research. Consequently, these equations are defined in different domains, both bounded and unbounded, and possess different steady states, requiring the adoption of suitable orthogonal polynomial bases.

Let us also remark, that the connection between orthogonal polynomials and probability theory to design uncertainty quantification methods further underscores the importance of our approach, as classical families of continuous and discrete orthogonal polynomials are intimately linked to probability distributions and our considerations naturally generalize to such contexts^^>\cite{Xiu2010}.

In the following sections, we provide a detailed exposition of our methodology and present numerical results demonstrating the efficacy of our approach. The rest of the manuscript is organized as follows.
In Section 2, we introduce some notations and present our moment-preserving approach based on a constrained $L_2$-minimization setting. We also study the convergence properties and prove a general result on spectral accuracy.
Section 3 is then devoted to testing the novel polynomial approximation, first by approximating probability densities with given moments and subsequently by considering different Fokker-Planck equations on bounded and unbounded domains.
Some concluding remarks are provided in the last section.

\section{Conservative approximations by orthogonal polynomials}
\label{sec:specContraint}
In order to get conservative approximations, we construct a conservative projection on the space of orthogonal polynomials by generalizing the constrained formulation approach introduced in^^>\cite{PareschiRey2022} for trigonometric polynomials.
In particular, we will give an explicit formulation of the orthogonal polynomial of best approximation in the weighted least square sense, constrained by preservation of moments, and show that it preserves a spectral accuracy property for smooth solutions like the classical orthogonal polynomial approximation. 

\subsection{Orthogonal polynomials}
	Let us first set up the mathematical framework of our analysis. To simplify our treatment we will consider the one-dimensional case. Given a real function $f(x) \in L^2_\omega(\Omega)$, with $\Omega \subseteq \R$ and $\omega(x) > 0$ a positive function, we define
\be
\|f\|_{L^2_\omega} = \left(\int_{\Omega}	|f(x)|^2 \omega(x)\,dx\right)^{1/2},
\label{eq:pnorm}
\ee	
which has the associated inner product
\be
\langle f,g \rangle_\omega = \int_{\Omega}	f(x)g(x) \omega(x)\,dx.
\ee
We assume that $\omega(x) > 0$ is such that
\be
\int_{\Omega} |x|^q\,\omega(x)\,dx < \infty,\quad q=0,1,2,\ldots
\label{eq:omega}
\ee
We consider the problem of approximating functions in the weighted $L^2_\omega(\Omega)$ space defined by the norm \eqref{eq:pnorm}. In the case $\omega(x) = 1$ we will use standard notations $\|\cdot\|_{L^2}$ and $\langle \cdot,\cdot \rangle$ to denote the $L^2$ norm and the associated inner product.

A polynomial basis $\{\phi_k\}$, $k=0,1,\ldots$ for $L^2_\omega(\Omega)$ is a set of functions such that any $f$ in the
space can be expressed uniquely as
\be
f(x) = \sum_{k=0}^\infty \f_k \phi_k(x).
\label{eq:frapp}
\ee
A set of polynomials $\{\phi_k\}$ is called \emph{orthogonal} if $\langle \phi_h,\phi_k \rangle_\omega = 0$ for $h\neq k$. We refer to Table \ref{tab:poly} for some examples of standard polynomial families

Orthogonal bases are particularly nice, both for theory and numerical approximation. In fact, from
\[
\langle f,\phi_k \rangle_\omega = \sum_{h=0}^\infty \f_h \langle \phi_h,\phi_k \rangle_\omega = \f_k \langle \phi_k,\phi_k \rangle_\omega,
\]
the coefficients in \eqref{eq:frapp} are easily represented as 
\be
\f_k = \frac{\langle f,\phi_k \rangle_\omega}{\langle \phi_k,\phi_k \rangle_\omega} = \frac{\langle f,\phi_k \rangle_\omega}{\|\phi_k\|^2_{L^2_\omega}}.
\ee
A first result that we recall is that the truncated approximation 
\be
f_N(x) = \sum_{k=0}^N \f_k \phi_k(x)
\ee	
is the best approximation of $f$ in the subspace $S_N={\spann}\left\{\phi_k\,|\, k=0,1,\ldots,N\right\}$.
More precisely, let $\proj_{\omega,N}: L^2_{\omega}(\Omega) \rightarrow S_N$ be the
orthogonal projection upon $S_N$ in the inner product of
$L^2_{\omega}(\Omega)$ 
\be
\langle f-\proj_{\omega,N} f,\phi_k\rangle_{\omega}=0,\qquad \forall\,\, \phi_k\,\in\,S_N.
\label{eq:ortho}
\ee
With these definitions $\proj_{\omega,N} f=f_N$, is the solution of the following minimization problem
\[
f_N = {\rm argmin} \left\{\| g_N - f \|_{L^2_{\omega}}\,\,:\,\,g_N\in S_N\,\right\}.
\]
By orthogonality, one also obtains the Parseval's identities 
\begin{equation*}
\| f \|^2_{L^2_\omega} = \sum_{k=0}^\infty |\f_k|^2\| \phi_k \|^2_{L^2_\omega}
,\qquad \| f_N \|^2_{L^2_\omega} = \sum_{k=0}^N |\f_k|^2\| \phi_k \|^2_{L^2_\omega},
\end{equation*}
which gives the error estimate
\[
\| f - f_N \|^2_{L^2_\omega} = \sum_{k=N+1}^\infty |\f_k|^2\| \phi_k \|^2_{L^2_\omega}.
\]	

An important feature of the orthogonal polynomial approximations on $S_N$ is related to their spectral convergence properties for smooth solutions. We report here a result for Jacobi type approximations^^>\cite{funaro1992}
\begin{theorem}
\label{thm:ClassicalFourierSpectralAccuracy}
If $f\in H_\omega^r(\Omega)$, where $r \geq 0$ is an integer, then there exists a constant $C > 0$ dependent on $\alpha,\beta$ and $r$ such that
 \be
\|f-f_N\|_{{L^2_\omega}} \leq \frac{C}{N^r} \left\|(1-x^2)^{r/2}\frac{d^r f}{dx^r}\right\|_{L^2_\omega},\qquad N > r.
\label{eq:ses}
\ee
\end{theorem}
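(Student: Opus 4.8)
The plan is to exploit the special Jacobi structure underlying the statement, namely $\Omega=[-1,1]$ and $\omega(x)=(1-x)^\alpha(1+x)^\beta$, and to transfer the $r$ derivatives of $f$ onto the basis by means of the classical differentiation relation for Jacobi polynomials. Identifying the orthogonal family $\{\phi_k\}$ with the Jacobi polynomials $P_k^{(\alpha,\beta)}$, I would rely on two standard facts: the derivative recurrence
\[
\frac{d}{dx}P_k^{(\alpha,\beta)}=\frac{k+\alpha+\beta+1}{2}\,P_{k-1}^{(\alpha+1,\beta+1)},
\]
which shows that differentiation sends the $(\alpha,\beta)$ family into the $(\alpha+1,\beta+1)$ family associated with the weight $(1-x^2)\omega$; and the explicit norm formula, whose Stirling expansion gives $\|P_k^{(\alpha,\beta)}\|^2_{L^2_\omega}\sim C\,k^{-1}$ as $k\to\infty$. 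The first preliminary remark is that the right-hand side seminorm is itself a shifted $L^2$ norm: since $(1-x^2)^r\omega(x)=(1-x)^{\alpha+r}(1+x)^{\beta+r}=:\omega_r(x)$, we have $\|(1-x^2)^{r/2} f^{(r)}\|^2_{L^2_\omega}=\|f^{(r)}\|^2_{L^2_{\omega_r}}$, and $\omega_r$ is precisely the weight for which $f^{(r)}$ expands in the $(\alpha+r,\beta+r)$ Jacobi basis.

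Next I would iterate the differentiation relation. Denoting by $\widehat{g}^{\,(a,b)}_k$ the coefficient of $g$ with respect to $P_k^{(a,b)}$, one differentiation gives $\widehat{f'}^{\,(\alpha+1,\beta+1)}_j=\frac{j+\alpha+\beta+2}{2}\,\widehat{f}^{\,(\alpha,\beta)}_{j+1}$, and applying this $r$ times (raising both parameters by one at each step) yields
\[
\widehat{f}^{\,(\alpha,\beta)}_{j+r}=C_{j,r}^{-1}\,\widehat{f^{(r)}}^{\,(\alpha+r,\beta+r)}_{j},
\]
where $C_{j,r}$ is a product of $r$ consecutive differentiation constants and hence satisfies $C_{j,r}\sim (j/2)^r$ as $j\to\infty$.

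I would then substitute this identity into the Parseval error representation already derived in the excerpt. With $\f_k=\widehat{f}^{\,(\alpha,\beta)}_k$ and the change of index $k=j+r$,
\[
\|f-f_N\|^2_{L^2_\omega}=\sum_{k=N+1}^\infty |\f_k|^2\,\|\phi_k\|^2_{L^2_\omega}=\sum_{j>N-r} C_{j,r}^{-2}\,\big|\widehat{f^{(r)}}^{\,(\alpha+r,\beta+r)}_j\big|^2\,\|P_{j+r}^{(\alpha,\beta)}\|^2_{L^2_\omega}.
\]
By the norm asymptotics the ratio $\|P_{j+r}^{(\alpha,\beta)}\|^2_{L^2_\omega}/\|P_{j}^{(\alpha+r,\beta+r)}\|^2_{L^2_{\omega_r}}$ stays bounded uniformly in $j$, while $C_{j,r}^{-2}\le C\,j^{-2r}\le C'\,N^{-2r}$ for $j>N-r$ and $N>r$. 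Comparing term by term with $\|f^{(r)}\|^2_{L^2_{\omega_r}}=\sum_j \big|\widehat{f^{(r)}}^{\,(\alpha+r,\beta+r)}_j\big|^2\,\|P_j^{(\alpha+r,\beta+r)}\|^2_{L^2_{\omega_r}}$ then gives $\|f-f_N\|^2_{L^2_\omega}\le C N^{-2r}\|f^{(r)}\|^2_{L^2_{\omega_r}}$, which is exactly \eqref{eq:ses} after taking square roots.

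The main obstacle is the uniform cancellation between the two $k^{-1}$ leading behaviors of the Jacobi norms and the $r$ differentiation factors hidden in $C_{j,r}$: these must combine to produce exactly the power $N^{-r}$ with no spurious loss, and the index shift $k=j+r$ together with the regime $r<N<2r$ — where $(N-r)^{-2r}\le C_r N^{-2r}$ requires an $r$-dependent constant — accounts for the stated dependence $C=C(\alpha,\beta,r)$. A secondary technical point is to justify the transfer of derivatives rigorously, i.e. the vanishing of boundary terms and the convergence of the tail series; this is precisely where the hypothesis $f\in H^r_\omega(\Omega)$ (ensuring $f^{(r)}\in L^2_{\omega_r}$) is used, and it can be underpinned by the self-adjointness and positivity of the Jacobi Sturm-Liouville operator $Au=-\omega^{-1}\big((1-x^2)\omega\,u'\big)'$, whose eigenvalues $\lambda_k=k(k+\alpha+\beta+1)\sim k^2$ drive the same $N^{-r}$ decay.
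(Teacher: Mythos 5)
The paper does not prove this statement: Theorem~2.1 is recalled as a classical result for Jacobi-type approximations and is simply cited from the literature (Funaro's book), so there is no internal proof to compare against. Your sketch reproduces the standard argument from that literature --- iterating the derivative identity $\frac{d}{dx}P_k^{(\alpha,\beta)}=\frac{k+\alpha+\beta+1}{2}P_{k-1}^{(\alpha+1,\beta+1)}$ to shift coefficients into the $(\alpha+r,\beta+r)$ family, then comparing the tail of Parseval's identity term by term using the $k^{-1}$ asymptotics of the Jacobi norms --- and it is essentially correct, including the observation that $\|(1-x^2)^{r/2}f^{(r)}\|_{L^2_\omega}=\|f^{(r)}\|_{L^2_{\omega_r}}$ with $\omega_r=(1-x^2)^r\omega$ and that the regime $r<N<2r$ forces the constant to depend on $r$. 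The only points that would need to be written out carefully are the ones you already flag: the integration by parts justifying the coefficient identity for $f\in H^1_\omega$ (the boundary terms vanish because $(1-x)^{\alpha+1}(1+x)^{\beta+1}$ vanishes at $\pm1$ for $\alpha,\beta>-1$), and the uniform boundedness of the norm ratio $\|P_{j+r}^{(\alpha,\beta)}\|^2_{L^2_\omega}/\|P_{j}^{(\alpha+r,\beta+r)}\|^2_{L^2_{\omega_r}}$, which is unproblematic here since the tail only involves indices $j\geq N+1-r\geq 2$.
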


\begin{table}[htp]
\caption{Families of orthogonal polynomials}
\begin{center}
\begin{tabular}{l|cc}
\hline\hline
Name & $\omega(x)$ & $\Omega$ \\
\hline\hline\\[-.3cm]
Jacobi &$(1-x)^\alpha (1+x)^\beta$, $\alpha,\beta > -1$ &$[-1,1]$\\[+.1cm]
\quad - Legendre & $1$, $\alpha=\beta=0$ &$[-1,1]$\\[+.1cm] 
\quad - Chebyshev 1st kind & $\displaystyle\frac1{\sqrt{1-x^2}}$, $\alpha=\beta=-\displaystyle\frac12$ &$[-1,1]$\\[+.3cm]
\quad - Chebyshev 2nd kind & $\displaystyle{\sqrt{1-x^2}}$, $\alpha=\beta=\displaystyle\frac12$ &$[-1,1]$\\[+.3cm]
Laguerre & $x^\alpha e^{-x}$, $\alpha > -1$& $\R^+$ \\[+.1cm]
Hermite & $|x|^{2\alpha} e^{-x^2}$, $\alpha > -1$ & $\R$ 
\end{tabular}
\end{center}
\label{tab:poly}
\end{table}%

\subsection{Approximation of moments}

Next, we discuss some properties of the projection operator $\proj_{\omega,N}$, in particular those concerning approximation of moments of nonnegative functions. We will denote for $q=0,1,2,\ldots$
\be
m_q = \int_{\Omega} f(x) x^q\,dx = \left\langle f, {x^q}\right\rangle, \qquad
m_{q,N} = \int_{\Omega} f_N(x) x^q\,dx = \left\langle f_N, {x^q}\right\rangle.
\ee
Note that
\[
m_{q} = \sum_{k=0}^\infty \f_k \mu_{q,k}, \qquad m_{q,N} = \sum_{k=0}^N \f_k \mu_{q,k},
\]
where for $k,q \geq 0$
\be
\mu_{q,k}=\langle \phi_k, x^q\rangle,
\label{eq:momp}
\ee
is the $q$-th moment of the $k$-th order polynomial.

It is well-known that moments of the weight function $\omega(x)$ permit to generate explicitly the orthogonal polynomials\cite{KoekoekLeskySwarttouw2010}. In our case, however, we are interested in the behavior of the moments of the orthogonal polynomials themselves defined in \eqref{eq:momp}. An important characterization of monic orthogonal polynomials 
is the classical three-term recurrence relation\cite{funaro1992, Gautschi2004, KoekoekLeskySwarttouw2010, GottliebOrszag1977}
\be
\begin{split}
\phi_{k}(x)&=(a_k x + b_k)\phi_{k-1}(x)+c_{k}\phi_{k-2}(x),\quad k \geq 1, 
\end{split}
\label{eq:three}
\ee
where we assumed $\phi_0(x)=1$, $\phi_{-1}(x)=0$ and the sequences $\{a_k\}_{k\geq 1}$, $\{b_k\}_{k\geq 1}$ and $\{c_k\}_{k\geq 1}$ depend on the particular polynomial family under consideration. 

Equation \eqref{eq:three} permits to recursively compute the value of $\phi_k(x)$ starting from the values of $\phi_0(x)$ and $\phi_1(x)$. 

From \eqref{eq:three} one gets the following relations for the moments coefficients 
\be
\begin{split}
\mu_{q,k} =& a_k\mu_{q+1,k-1}+b_k\mu_{q,k-1}+c_k\mu_{q,k-2},\quad k \geq 1,\quad q\geq 0,
\end{split}
\label{eq:momr}
\ee
with 
\[
\mu_{q,0}=\frac{x^{q+1}}{q+1}\Big|_{\Omega},\quad \mu_{q,-1}=0,\quad q \geq 0,
\]
which, therefore, can also be computed recursively from \eqref{eq:momr} up to the moment $q$ provided we know $q+h$ moments of $\phi_{k-h}(x)$ for $h=1,\ldots, k$. 

\subsubsection{Unbounded domains}
Note, however, that for unbounded domains $\Omega$, like the case of Laguerre or Hermite polynomials (see Table \ref{tab:poly}), the moments $\mu_{q,0}$, $q \geq 0$, are not finite. In such cases, one resorts on suitable orthogonal function families with respect to the inner product $\langle\cdot,\cdot\rangle$ defined in the symmetrically weighted case^^>\cite{ManziniFunaroDelzanno2016, BessemoulinFilbet2021,Holloway1996} as 
\[
P_k(x) = \phi_k(x) w^{1/2}(x),
\] 
so that $\langle P_h, P_k \rangle = \langle p_h, p_k \rangle_\omega$, $\forall\, h,k \geq 0$.
 
For functions that we write in the form $f(x)=h(x)w^{1/2}(x)$ we have
\be
f(x)=\sum_{k=0}^\infty \h_k P_k(x),
\label{eq:fscaled}
\ee
with
\be
\h_k = \frac{\langle f,P_k \rangle}{\langle P_k,P_k \rangle} = \frac{\langle h,\phi_k \rangle_\omega}{\|\phi_k\|^2_{L^2_\omega}}.
\ee
Clearly, the orthogonal functions $P_k$ satisfy an analogous three term relation as \eqref{eq:three} with $P_0(x)=w^{1/2}(x)$, and similarly their moments satisfy \eqref{eq:momr} with 
\[
\langle P_0, x^q\rangle =\frac{x^{q+1}}{q+1}w^{1/2}(x)\Big|_{\Omega},\quad \mu_{q,-1}=0,\quad q \geq 0.
\] 
The above quantity is bounded for Laguerre and Hermite polynomials thanks to \eqref{eq:omega}. 

We can then define $f_N(x)$ as the truncated approximation to the first $N+1$ terms of \eqref{eq:fscaled}. We have
\be
\langle f-f_N,P_k\rangle=0,\qquad \forall\,\, P_k=\phi_k w^{1/2}, \phi_k\,\in\,S_N.
\label{eq:ortho2}
\ee
In the weighted function family representation Parseval identity reads
\[
\| f \|_2 = \sum_{k=0}^\infty |\h_k|^2 \|P_k\|^2_2.
\]
We report here a result for Hermite approximations^^>\cite{funaro1992,FokGuoTang2002}
\begin{theorem}
\label{thm:HermiteSpectralAccuracy}
If $f\in H_\omega^r(\Omega)$, where $r \geq 0$ is an integer, then there exist a constant $C > 0$ and $r$ such that
 \be
\|f-f_N\|_{{L^2_\omega}} \leq \frac{C}{N^{r/2}} \left\|f\right\|_{H^r_\omega},\qquad N > r.
\label{eq:SpecralHermite}
\ee
\end{theorem}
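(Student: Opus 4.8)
The plan is to exploit the fact that, for the Hermite weight $\omega(x)=|x|^{2\alpha}e^{-x^2}$, the orthogonal functions $P_k=\phi_k w^{1/2}$ are the eigenfunctions of a self-adjoint Hermite operator $\LL$ on $\R$ --- in the standard case $\alpha=0$ simply $\LL=-\partial_x^2+x^2$, with eigenvalues $\lambda_k=2k+1$ growing \emph{linearly} in $k$. This spectral structure is what converts the regularity assumption $f\in H_\omega^r(\Omega)$ into a decay rate for the coefficients $\h_k$, which is all that is needed to control the truncation tail.

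First I would record the error identity coming from the Parseval relation in the $P_k$ basis recalled above, namely $\|f-f_N\|_2^2=\sum_{k=N+1}^\infty|\h_k|^2\|P_k\|_2^2$ (recall that the symmetric weighting gives $\langle P_h,P_k\rangle=\langle\phi_h,\phi_k\rangle_\omega$, so this $L^2$ norm is the one appearing in \eqref{eq:SpecralHermite}). Next, using $\LL P_k=\lambda_k P_k$ and the self-adjointness of $\LL^{r/2}$, I would write $\langle\LL^{r/2}f,P_k\rangle=\lambda_k^{r/2}\h_k\langle P_k,P_k\rangle$, so that $\lambda_k^{r/2}\h_k$ are precisely the coefficients of $\LL^{r/2}f$, and Parseval gives $\sum_k\lambda_k^r|\h_k|^2\|P_k\|_2^2=\|\LL^{r/2}f\|_2^2$. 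Factoring the smallest eigenvalue out of the tail,
\[
\|f-f_N\|_2^2=\sum_{k>N}|\h_k|^2\|P_k\|_2^2\le\lambda_{N+1}^{-r}\sum_{k>N}\lambda_k^r|\h_k|^2\|P_k\|_2^2\le\lambda_{N+1}^{-r}\|\LL^{r/2}f\|_2^2,
\]
and since $\lambda_{N+1}=2N+3\ge cN$, taking square roots already yields the rate $N^{-r/2}$.

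The remaining --- and genuinely delicate --- step is to identify $\|\LL^{r/2}f\|_2$ with the weighted Sobolev norm $\|f\|_{H^r_\omega}$ on the right-hand side of \eqref{eq:SpecralHermite}, and this is the main obstacle. It is exactly what produces the half-order rate $N^{-r/2}$ here, as opposed to the $N^{-r}$ obtained for Jacobi families in Theorem~\ref{thm:ClassicalFourierSpectralAccuracy}: the Hermite operator is second order, yet its eigenvalues grow only linearly ($\lambda_k\sim k$, versus $\lambda_k\sim k^2$ for the Jacobi operator), so each factor $\lambda_k$ ``pays'' for two derivatives while gaining only one power of $k$. Because $\LL^{r/2}$ for even $r$ is a differential operator of order $r$ with polynomial (hence unbounded) coefficients, the norm $\|\LL^{r/2}f\|_2$ controls the mixed weighted quantities $\|x^\ell\partial_x^{\,j}f\|_2$ with $j+\ell\le r$, i.e.\ the genuine Hermite--Sobolev norm, which has to be matched against $\|f\|_{H^r_\omega}$. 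I would establish this by repeated integration by parts on $\R$ --- justified by the Gaussian decay of $w^{1/2}$ and the moment condition \eqref{eq:omega}, which force all boundary contributions at $\pm\infty$ to vanish --- using the three-term recurrence \eqref{eq:three} and the Hermite derivative relations to trade differentiation for index shifts, with an interpolation argument to cover odd $r$.

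Finally, I would assemble the pieces, absorbing the eigenvalue constant and the norm-equivalence constant into a single $C=C(r)$, to obtain \eqref{eq:SpecralHermite}. Since the statement only recalls a known fact, the most economical alternative is to invoke the Hermite approximation estimates of \cite{funaro1992,FokGuoTang2002} directly and adapt the constants to the present normalization of $P_k$; the operator-based argument sketched above is nonetheless the route I would take, as it makes the dependence on $r$ and the origin of the half-order loss fully transparent.
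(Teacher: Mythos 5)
First, a point of comparison: the paper does not prove this statement at all --- Theorem~\ref{thm:HermiteSpectralAccuracy} is recalled verbatim from \cite{funaro1992,FokGuoTang2002} with no argument attached, so the ``economical alternative'' you mention at the end (cite and normalize) is in fact exactly what the paper does. Your operator-theoretic sketch is therefore being measured against the standard proof in those references rather than against anything in this manuscript, and its skeleton does coincide with that standard proof: Parseval in the $P_k$ basis, the eigenrelation $\mathcal{L}P_k=(2k+1)P_k$, and the tail bound $\sum_{k>N}|\hat h_k|^2\|P_k\|_2^2\le\lambda_{N+1}^{-r}\sum_{k>N}\lambda_k^{r}|\hat h_k|^2\|P_k\|_2^2$ are all correct and do produce the rate $N^{-r/2}$ from the linear growth of $\lambda_k$. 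Your diagnosis of why Hermite loses half an order relative to Jacobi is also the right one.

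The genuine gap is the step you explicitly defer, and it is not merely ``delicate'': the inequality $\|\mathcal{L}^{r/2}f\|_{L^2}\le C\|f\|_{H^r_\omega}$ is \emph{false} if $H^r_\omega$ is read in the most common way, i.e.\ $\|f\|_{H^r_\omega}^2=\sum_{j\le r}\|\partial_x^jf\|_{L^2_\omega}^2$ with no weights in $x$. Already for $r=2$, $\mathcal{L}f=-f''+x^2f$ contains the term $x^2f$, and $\|x^2f\|_{L^2}$ is not controlled by $\|f\|_{L^2_\omega}+\|f'\|_{L^2_\omega}+\|f''\|_{L^2_\omega}$ for a general $f$ in that space; integration by parts cannot manufacture the missing polynomial weights, it only redistributes derivatives. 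The cited references resolve this either by defining the Hermite--Sobolev norm to include the mixed quantities $\|x^\ell\partial_x^jf\|$ with $j+\ell\le r$ (in which case the theorem as stated here is really about that larger norm), or by proving the coefficient decay $\sum_k k^{r}|\hat h_k|^2\|P_k\|_2^2\le C\|f\|^2_{H^r_\omega}$ directly from the ladder relations $(\partial_x\pm x)P_k\propto P_{k\mp1}$, one derivative at a time, which avoids ever forming $\mathcal{L}^{r/2}$. Since the paper never defines $H^r_\omega$, your sketch is not wrong so much as incomplete at exactly the point where the content lives; the interpolation claim for odd $r$ is likewise asserted rather than argued. If you intend the proof to be self-contained, you must either adopt the weighted definition of $H^r_\omega$ explicitly or replace the $\mathcal{L}^{r/2}$ step by the ladder-operator coefficient estimate.
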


Let us remark that, in general, moments are not preserved by the projection. Neither in standard nor in the weighted case. In fact, we have
\be\label{eq:MomProjError_weight}
m_q - m_{q,N} =  \left\langle f - f_N, {x^q}\right\rangle  = \left\langle f - f_N, \frac{x^q}{\omega(x)}\right\rangle_\omega 
\ee
which, by \eqref{eq:ortho}, is equal to zero only if ${x^q}/{\omega(x)}$ belongs to $S_N$.
A special case is represented by the Legendre polynomials which by construction preserve all moments of the approximated function $f(x)$ up to the order $q \leq N$.

Finally, we have for each $\varphi \in L^2_\omega(\Omega)$ by H\"older's inequality
\begin{equation}\label{eq:WeightedSpectralMoment}
  |\left\langle f,\varphi\right\rangle_\omega - \left\langle f_N,\varphi\right\rangle_\omega| \leq \|\varphi\|_{L^2_\omega} \|f-f_N\|_{L^2_\omega}.
\end{equation}
Therefore, the projection error on the moments decays faster than algebraically, for the weighted norms, when the
solution is infinitely smooth. Although this is in general a guarantee of the accuracy of the methods, in practice the loss of conservations can lead to accumulations of error in the approximation of PDEs over long time scales, resulting in the determination of inaccurate steady states\cite{PareschiZanella2018,Gosse2013}.   

\subsection{Moment preserving orthogonal polynomials}
Although the error on moments is spectrally small for smooth functions, in many applications moments represent structural properties of the PDE and are related to conservation of physical properties such as mass, momentum and energy. In such cases, the design of a numerical method that is structure preserving requires the construction of a projection operator that does not affect the relevant moments. To this end, we will construct a different projection operator on the space of polynomials, $\proj^c_{\omega,N}: L^2_\omega(\Omega)\to S_N$ such that it preserves a finite number of moments. More precisely, we require the projection to satisfy for $q=0,1,\ldots,Q$
\[
\left\langle \proj^c_{\omega,N} f, {x^q}\right\rangle = \left\langle f, {x^q}\right\rangle,
\]
but maintaining the convergence properties of the original orthogonal projection.

To this aim, it is natural to consider the following constrained best approximation problem 
\be
\label{def:constrainedApprox}
f^c_N = {\rm argmin} \left\{\| g_N - f \|^2_{L^2_\omega}\,:\,g_N\in S_N,\,\, \langle g_N,x^q \rangle  =
\langle f,x^q \rangle,\,\,q=0,1,\ldots,Q\right\}.
\ee 
Now, since $g_N\in S_N$ we can represent it in the form
\[
g_N(x)=\sum_{k=0}^N \hat g_k \phi_k(x)
\]
and then by Parseval's identity
\[
\| g_N - f \|^2_{L^2_\omega} = \sum_{k=0}^\infty |\hat g_k-\hat f_k|^2\|\phi_k\|^2_{L^2_\omega},
\]
where we assumed $\hat g_k=0$, $k > N$.

We require that
\be
\langle g_N,x^q \rangle = \sum_{k=0}^N \hat g_k \mu_{q,k} =\langle f,x^q \rangle,\quad q=0,1,\ldots,Q
\label{eq:mom}
\ee
or, after introducing the vector of moments $\Phi = (1,x,x^2,\ldots,x^Q)^T$, in vector form 
\[
\langle g_N,\Phi \rangle = \sum_{k=0}^N \hat g_k \hat \Phi_k =\langle f, \Phi \rangle,
\]
where $\hat \Phi_k=(\mu_{0,k},\mu_{1,k},\ldots,\mu_{Q,k})^T$.

Let us now solve the minimization problem \eqref{def:constrainedApprox} using the Lagrange multiplier method.
Let $\lambda\in\R^{Q+1}$ be the vector of Lagrange multipliers, we consider the objective function 
\[
\LL(\hat g,\lambda) = \sum_{k=0}^\infty |\hat g_k-\hat f_k|^2\|\phi_k\|^2_{L^2_\omega} + \lambda^T\left(\sum_{k=0}^N \hat g_k \hat \Phi_k -\langle f, \Phi \rangle\right)=0,
\]
with $\hat g\in\R^{N+1}$ the vector of coefficients $\hat g_k$, $k=0,\ldots,N$.  

Stationary points are found by imposing
\begin{eqnarray*}
\frac{\partial \LL(\hat g, \lambda)}{\partial \hat g_k} = 0,\quad k=0,1,\ldots,N;\qquad
\frac{\partial \LL(\hat g, \lambda)}{\partial \lambda_q} = 0,\quad q=0,1,\ldots,Q.
\end{eqnarray*}
From the first condition, one gets
\begin{equation}
	\label{eq:cond1Lagrange}
	2({\hat g}_k-{\hat f}_k)\|\phi_k\|^2_{L^2_\omega}+ \lambda^T \hat\Phi_k=0,
\end{equation}
whereas the second condition corresponds to \eqref{eq:mom}.

Multiplying the above equation by $\hat \Phi_k/\|\phi_k\|^2_{L^2_\omega}$ and summing up over $k$ we can write
\[
2 \sum_{k=0}^N ({\hat g}_k-{\hat f}_k)\hat \Phi_k +   \sum_{k=0}^N \frac1{\|\phi_k\|^2_{L^2_\omega}} \hat \Phi_k\hat\Phi^T_k\lambda=0
\]
and, using  \eqref{eq:mom} as well as the fact that $\hat \Phi_k\hat\Phi^T_k$
are symmetric and positive definite matrices of size $Q+1$ one obtains
\be
	\label{eq:Lambda}
	\lambda = -2\left(\sum_{k=0}^N \frac1{\|\phi_k\|^2_{L^2_\omega}} \hat \Phi_k\hat\Phi^T_k\right)^{-1}(\langle f, \Phi \rangle-\langle f_N, \Phi \rangle).
\ee 
Now, rewriting the first condition \eqref{eq:cond1Lagrange} as
\[
	{\hat g}_k = {\hat f}_k - \frac{1}{2\|\phi_k\|^2_{L^2_\omega}}  \hat\Phi_k^T \lambda,
\]
and plugging the expression \eqref{eq:Lambda} of $\lambda$ in it, one obtains that the minimum is achieved for $\hat g_k = \hat f_k^c$, given by the following definition.

\begin{definition}
For $f \in L^2_\omega(\Omega)$, we define the \emph{conservative orthogonal projection} $\proj_{N,\omega}^c f = f_N^c$ in $S_N$, where $f_N^c$ is given by
\be
f_N^c(x) = \sum_{k=0}^N \hat f^c_k \phi_k(x),
\label{eq:cp}
\ee
and we define the \emph{moment constrained coefficients} as
\be
{\hat f^c_k} ={\hat f_k} + \hat C^T_k (\langle f, \Phi \rangle-\langle f_N, \Phi \rangle),\qquad \hat C^T_k = \frac{1}{\|\phi_k\|^2_{L^2_\omega}} \hat \Phi^T_k \left(\sum_{h=0}^N \frac1{\|\phi_h\|^2_{L^2_\omega}} \hat \Phi_h\hat\Phi^T_h\right)^{-1}.
\label{eq:minim1}
\ee
\label{def:1}
\end{definition}

The following result states the spectral accuracy of the conservative best approximation in least square sense \eqref{eq:cp}, and generalizes Theorem \ref{thm:ClassicalFourierSpectralAccuracy}. In order to prove this result, one needs the following hypothesis:
\begin{hypothesis}\label{Hyp:SpectralMoment}
The moments of the approximated solution are spectrally accurate in the classical $L^2$-norm. Namely, there exists $C>0$ such that
\be
\|U-U_N\| \leq \frac{C}{N^r}\|f\|_{H_\omega^r} \|\Phi\|_{2,L^2_\omega}
\ee
where $U$ and $U_N$ denote the moment vectors of $f$ and $f_N$ respectively:
\be
U =\left\langle f,\Phi\right\rangle ,\quad U_N=\left\langle f,\Phi\right\rangle.
\ee
\end{hypothesis}
This assumption is different from the spectral accuracy on the moment, in weighted norm, one could obtain from \eqref{eq:WeightedSpectralMoment}. This assumption is a consequence of a (classical) spectral accuracy in $L^2$ of the function $f$. Therefore, it is a little bit weaker than asking further regularity on $f$ in $L^2$ in addition to the one in weighted $L^2$. This property depends heavily on the function under consideration and we will see in numerical simulations that it may only be a technical detail.
\begin{theorem}
\label{thm:SpectralAccuracyfNc}
If $f\in H_\omega^r(\Omega)$, where $r \geq 0$ is an integer, we have
\be
\| f - f^c_N\|_{{L_\omega^2}} \leq \frac{C_\Phi}{N^r} \|f\|_{H_\omega^r} 
\label{eq:sec}
\ee
where the constant $C_\Phi$ depends on the spectral radius of the matrix 
\begin{equation*}
    M=\sum_{h=0}^N\frac{1}{\|\phi_h\|^2_{L^2_\omega}}\hat \Phi_h\hat\Phi^T_h,
\end{equation*}
 and on $\|\Phi\|^2_{2,L^2_\omega}=\sum_{q=0}^{Q}\|\Phi_q\|^2_{L^2_\omega}$ 
where $\Phi_q$, $q=0,1,\ldots,Q$ are the components of the vector $\Phi$.
\end{theorem}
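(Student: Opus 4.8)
The plan is to compare the conservative projection $f_N^c$ with the ordinary truncation $f_N$ and to transfer the spectral decay of the latter (Theorem~\ref{thm:ClassicalFourierSpectralAccuracy}) to the former, paying a price governed entirely by the conditioning of the constraint matrix $M$. First I would split
\[
f - f_N^c = (f - f_N) + (f_N - f_N^c)
\]
and apply the triangle inequality in $L^2_\omega$. The term $\|f - f_N\|_{L^2_\omega}$ is already $O(N^{-r})\|f\|_{H_\omega^r}$ by Theorem~\ref{thm:ClassicalFourierSpectralAccuracy}, after bounding the weighted-derivative factor $(1-x^2)^{r/2} d^r f/dx^r$ by the full Sobolev norm. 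Hence everything reduces to estimating the correction $\|f_N - f_N^c\|_{L^2_\omega}$, which lives entirely in $S_N$.

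Since both $f_N$ and $f_N^c$ lie in $S_N$, Parseval's identity gives $\|f_N - f_N^c\|^2_{L^2_\omega} = \sum_{k=0}^N |\hat f_k - \hat f_k^c|^2 \|\phi_k\|^2_{L^2_\omega}$. From the definition \eqref{eq:minim1} of the moment-constrained coefficients, $\hat f_k - \hat f_k^c = -\hat C_k^T(U - U_N)$, where $U - U_N = \langle f,\Phi\rangle - \langle f_N,\Phi\rangle$ is the vector of moment defects, so each squared term is the quadratic form $(U-U_N)^T (\hat C_k \hat C_k^T)(U-U_N)$. Summing against the weights $\|\phi_k\|^2_{L^2_\omega}$ and using $\hat C_k = \|\phi_k\|^{-2}_{L^2_\omega}\, M^{-1}\hat\Phi_k$ (valid since $M$ is symmetric), the crucial algebraic step is the telescoping identity
\[
\sum_{k=0}^N \|\phi_k\|^2_{L^2_\omega}\, \hat C_k \hat C_k^T = M^{-1}\left(\sum_{k=0}^N \frac{1}{\|\phi_k\|^2_{L^2_\omega}}\hat\Phi_k\hat\Phi_k^T\right)M^{-1} = M^{-1} M M^{-1} = M^{-1},
\]
which collapses the correction to the exact expression $\|f_N - f_N^c\|^2_{L^2_\omega} = (U-U_N)^T M^{-1}(U-U_N)$.

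From here I would bound this quadratic form by $\|M^{-1}\|_2\,\|U-U_N\|_2^2 = \rho(M^{-1})\,\|U-U_N\|_2^2$, where $\rho(M^{-1}) = 1/\lambda_{\min}(M)$ because $M$ is symmetric positive definite, and then invoke Hypothesis~\ref{Hyp:SpectralMoment} to replace $\|U - U_N\|_2$ by $C N^{-r}\|f\|_{H_\omega^r}\|\Phi\|_{2,L^2_\omega}$. Collecting the two contributions of the triangle inequality then yields \eqref{eq:sec}, with a constant $C_\Phi$ assembled from $\rho(M^{-1})$ (the spectral radius information attached to $M$ in the statement) and $\|\Phi\|_{2,L^2_\omega}$, exactly as claimed.

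The genuinely delicate point is not the algebra, since the telescoping identity is exact, but the dependence of $M = M_N$ on $N$: the constant $C_\Phi$ is only meaningful if $\lambda_{\min}(M_N)$ remains bounded away from zero (equivalently $\rho(M_N^{-1})$ stays bounded) uniformly in $N$, which is precisely where the conditioning of the moment matrix enters. I would therefore flag this uniformity explicitly, lean on Hypothesis~\ref{Hyp:SpectralMoment} to supply the decay of the moment defect in the \emph{unweighted} norm (the weighted estimate \eqref{eq:WeightedSpectralMoment} does not deliver this directly, which is why the hypothesis is needed), and state the bound either at fixed $N$ or under a uniform spectral-gap assumption on $M_N$ consistent with the phrasing of the theorem.
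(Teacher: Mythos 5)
Your proposal is correct and follows the same overall skeleton as the paper's proof: split off $f-f_N$, control it by Theorem~\ref{thm:ClassicalFourierSpectralAccuracy}, and reduce the correction $f_N-f_N^c$ via Parseval and Definition~\ref{def:1} to the moment defect $U-U_N$, which is then handled by Hypothesis~\ref{Hyp:SpectralMoment}. Where you genuinely depart from the paper is in the treatment of $\sum_{k=0}^N\|\phi_k\|^2_{L^2_\omega}\,|\hat C_k^T(U-U_N)|^2$: the paper applies Cauchy--Schwarz term by term and then bounds $\sum_k\|\hat C_k\|_2^2\|\phi_k\|^2_{L^2_\omega}$ by $\|\Phi\|^2_{2,L^2_\omega}\,\rho^2(M^{-1})$, whereas you exploit the exact telescoping identity $\sum_k\|\phi_k\|^2_{L^2_\omega}\hat C_k\hat C_k^T=M^{-1}MM^{-1}=M^{-1}$ to obtain the closed form $\|f_N-f_N^c\|^2_{L^2_\omega}=(U-U_N)^TM^{-1}(U-U_N)\leq\rho(M^{-1})\|U-U_N\|_2^2$. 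Your identity is correct (it uses only the symmetry of $M$) and yields a strictly sharper constant, with $\rho(M^{-1})^{1/2}$ in place of the paper's $\rho(M^{-1})\|\Phi\|_{2,L^2_\omega}$ for this term; both versions give \eqref{eq:sec} with a $C_\Phi$ of the advertised form. Your closing remark about the uniformity in $N$ of $\lambda_{\min}(M_N)$ is also well placed: the paper does not prove such uniformity but only observes it numerically (cf.\ Figure~\ref{fig:SpectralRad}), so flagging it as an assumption rather than a consequence is the honest reading of the statement.
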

\begin{proof}
We can split
\[
\| f - f^c_N\|^2_{L^2_\omega} \leq \| f - f_N\|^2_{L^2_\omega}+\| f_N - f^c_N\|^2_{L^2_\omega}.
\]
The first term is bounded by the spectral estimate of truncated approximation of the form \eqref{eq:ses}, whereas
for the second term, by Parseval's identity, we have 
\[
\| f^c_N - f_N \|^2_{L^2_p} = \sum_{k=0}^N |\hat f^c_k-\hat f_k|^2\|p_k\|^2_{L^2_\omega}.
\]
Now, using the definition \eqref{eq:minim1}
we get
\begin{eqnarray*}
\sum_{k=0}^N |\hat f^c_k-\hat f_k|^2\|p_k\|^2_{L^2_\omega} &=& \sum_{k=0}^N |\hat C^T_k (U-U_N)|^2\|p_k\|^2_{L^2_\omega} \leq \|U-U_N\|_2^2 \sum_{k=0}^N \|\hat C^T_k\|_2^2\|p_k\|^2_{L^2_\omega}.
\end{eqnarray*}
where $\|\cdot\|_2$ denotes the euclidean norm of the vector. The assumption \ref{Hyp:SpectralMoment} of spectral accuracy, in classical $L^2$ norm, on the moments implies
\[
\|U-U_N\|_2 \leq \frac{C}{N^r} \|f\|_{H_\omega^r} \left(\sum_{q=0}^{Q}\|\Phi_q\|^2_{L^2_\omega}\right)^{1/2} = \frac{C}{N^r}\|f\|_{H_\omega^r}\|\Phi\|_{2,L^2_\omega}
\]
where for $\Phi=(\Phi_0,\ldots,\Phi_{Q})^T$ we defined 
$\|\Phi\|^2_{2,L^2_\omega}=\sum_{q=0}^{Q}\|\Phi_q\|^2_{L^2_\omega}$. Finally, by definition of $\hat C^T_k = \frac{1}{\|\phi_k\|^2_{L^2_\omega}} \hat \Phi^T_k \left(\sum_{h=0}^N \frac1{\|\phi_h\|^2_{L^2_\omega}} \hat \Phi_h\hat\Phi^T_h\right)^{-1}$, one has
\begin{equation*}
  \sum_{k=0}^N \|\hat C^T_k\|_2^2\|\phi_k\|^2_{L^2_\omega} \leq \sum_{k=0}^N \|\hat \Phi_k\|_2^2 \left\|\left(\sum_{h=0}^N \frac{1}{\|\phi_h\|^2_{L^2_\omega}} \hat \Phi_h\hat\Phi^T_h\right)^{-1}\right\|^2_2.
\end{equation*}
Now let $M = \sum_{h=0}^N\hat \Phi_h\hat\Phi^T_h/\|\phi_h\|^2_{L^2_\omega}$. It follows that 
\begin{equation}
    \sum_{k=0}^N \|\hat C^T_k\|_2^2\|\phi_k\|^2_{L^2_\omega} \leq \|\Phi\|^2_{2,L_\omega^2}\left\|M^{-1}\right\|^2_2.
\end{equation}
As a sum of positive definite symmetric matrices, $M$ enjoys the same properties. Consequently,
\begin{equation}
    \sum_{k=0}^N \|\hat C^T_k\|_2^2\|\phi_k\|^2_{L^2_\omega} \leq \|\Phi\|^2_{2,L_\omega^2}\rho^2(M^{-1}),
\end{equation}
where $\rho(M^{-1})$ denotes the spectral radius of the matrix $M^{-1}$. Combining everything, one finally obtains
\begin{equation*}
\| f^c_N - f_N \|_{L^2_\omega} \leq  \frac{C}{N^r} \|f\|_{H_\omega^r}\|\Phi\|_{2,L_\omega^2}\rho(M^{-1})
\end{equation*}
which proves \eqref{eq:sec}.
\end{proof}  

\begin{remark}
The conservative best approximation in least square \eqref{eq:minim1} can be represented in term of the standard projection as
\be
\P_N^c f = \P_N f + \sum_{k=0}^N \hat C^T_k \langle f-\P_N f,\Phi \rangle.
\label{eq:standp}
\ee
\end{remark}

\begin{remark}
  The result of Theorem \ref{thm:SpectralAccuracyfNc} guarantees the spectral accuracy of the constrained approximation. It is however worth mentionning that the constant $C_\Phi$ from the error estimate involves the spectral radius of the matrix $M^{-1}$, which itself depends on the number of modes. Nonetheless, this dependency doesn't seem to significantly affect the convergence rate of the constrained method as shown through the various numerical experiments in Section \ref{sec:NumExp}. Note that the spectral radius of $M^{-1}$, which appears in $C_\Phi$, doesn't grow with $N$. It is more affected by a high number of constraints. We present in Figure \ref{fig:SpectralRad} this spectral radius as a function of $N$ and $Q$ in the case of the Chebyshev 2\textsuperscript{nd} kind polynomials.
  \begin{figure}[!ht]
    \centering
    \includegraphics[width=.45\linewidth]{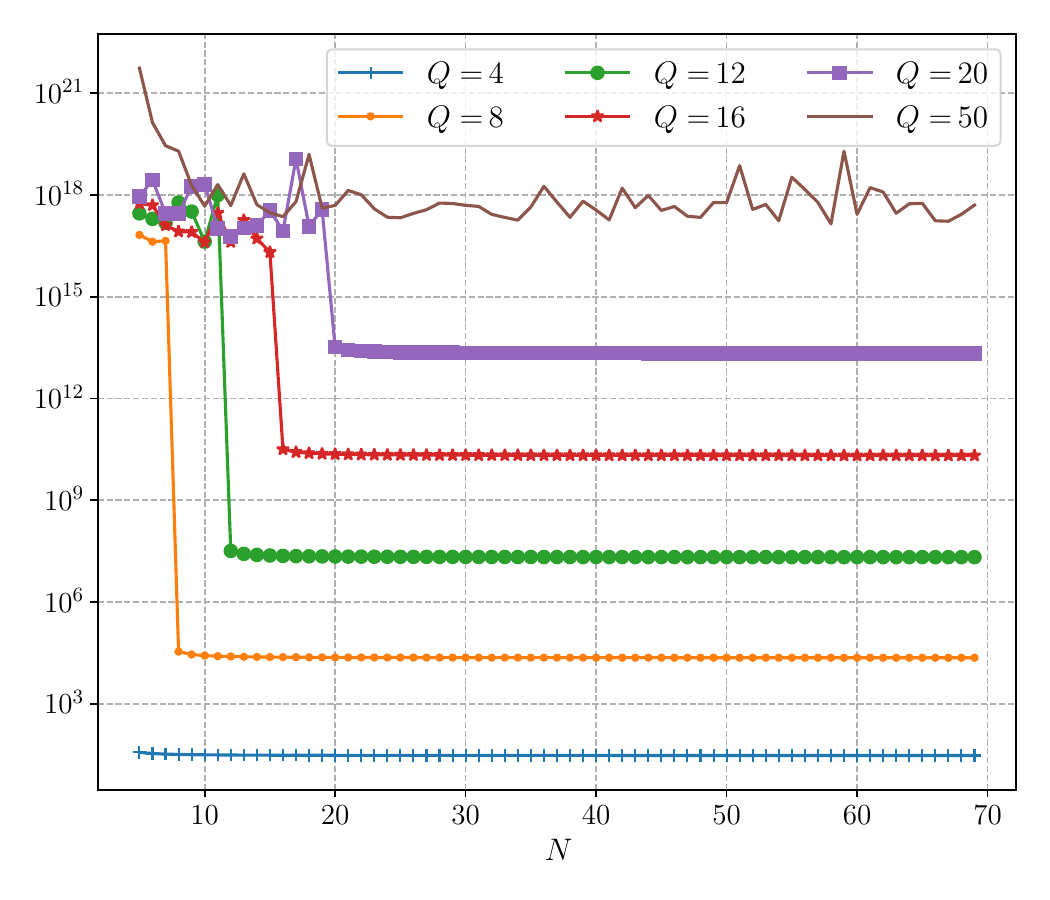}
    \includegraphics[width=.45\linewidth]{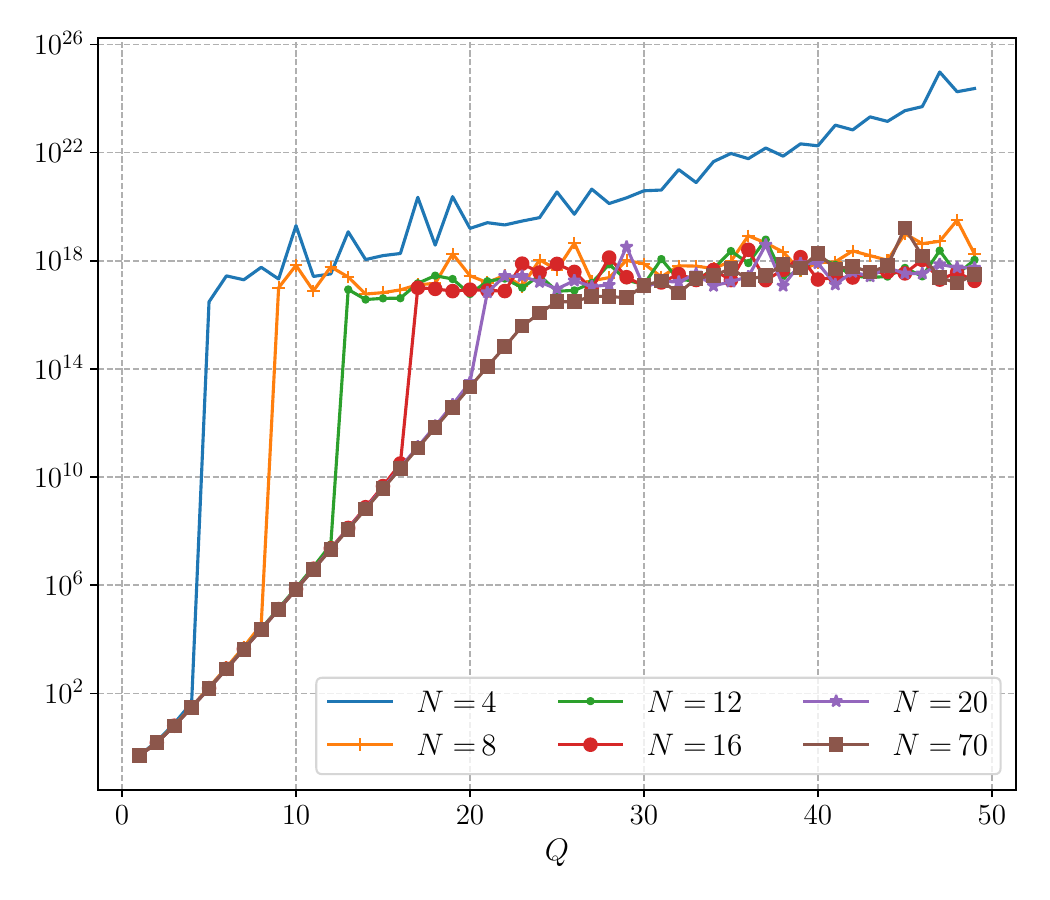}
    \caption{Spectral radius of the matrix $M^{-1}$ as a function of $N$ (Left), and as a function of $Q$ (Right) for Chebyshev 2\textsuperscript{nd} kind polynomials.}
    \label{fig:SpectralRad}
  \end{figure}
\end{remark}

\section{Numerical examples and applications}\label{sec:NumExp}
In this section, we conduct numerical experiments to illustrate the properties of the proposed method. The initial test focuses on evaluating the accuracy of the moment-preserving approach. Subsequently, in Tests 2 to 4, we employ the conservative method within a PDE framework, considering scenarios involving both bounded and unbounded domains. The ensuing analysis includes the assessment of the numerical scheme's accuracy, its conservation properties, and an exploration of the long-time behavior of solutions. In the following, all integrals are approximated using Gaussian quadrature rules associated to the chosen polynomial basis. The number of quadrature points is set to $80$.

\subsection{Test 1: Approximation of functions}
\paragraph{Bounded domain: $[-1,1]$}
Let us consider the approximation of the oscillating and non symetric function
\be\label{testfunctionBounded}
  f(x) = \sin(2 \pi x) + x^2 \cos(2 \pi x),\quad x\in[-1,1].
\ee
We first consider a standard approximation using Legendre polynomials. 
\begin{figure}
    \centering
    \includegraphics[width=.6\linewidth]{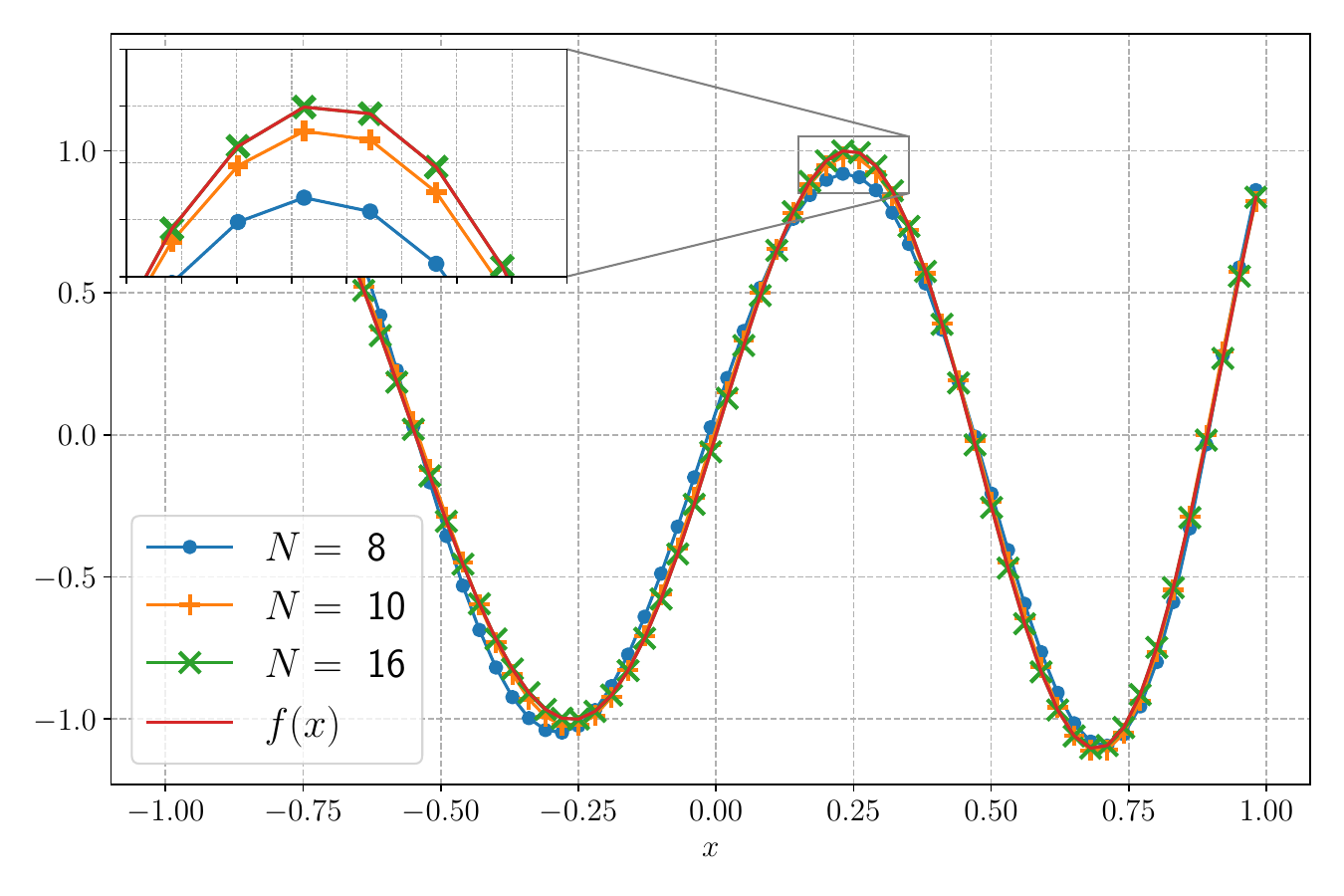}
    \caption{Constrained Legendre approximation for \eqref{testfunctionBounded}.}
    \label{fig:PlotLegendre}
\end{figure}
Figure \ref{fig:PlotLegendre} illustrates the approximate solutions and Table \ref{tab:AccLegendre} shows that the standard method is indeed spectrally accurate. In addition, we observe that moments are indeed preserved without constraining the projection as predicted by \eqref{eq:MomProjError_weight}. Note that errors are sometimes extremely close to $0$ but not exactly $0$ because the Gaussian quadrature used may introduces small machine epsilons.
\begin{table}
  \begin{center}
  \begin{tabular}{c|c|c|c|c|c}
    N & $\|f-f_N\|_2$ & $|m_0-m_{0,N}|$ & $|m_1-m_{1,N}|$ & $|m_2-m_{2,N}|$ & $|m_3-m_{3,N}|$ \\
    \hline
    8 & 9.004e-03 & 2.429e-15 & 2.387e-15 & 2.387e-15 & 2.331e-15 \\
    \hline
    16 & 2.197e-05 & 1.388e-17 & 0 & 0 & 0 \\
    \hline
    32 & 1.050e-13 & 8.327e-17 & 1.110e-16 & 0 & 0\\
  \end{tabular}
  \end{center}
  \caption{Error Analysis: Legendre approximation for \eqref{testfunctionBounded}.}
  \label{tab:AccLegendre}
\end{table}%

Let us now compare the standard and constrained approximations for three polynomial families: Chebyshev 1\textsuperscript{st} kind, Chebyshev 2\textsuperscript{nd} kind, and general Jacobi with parameters $\alpha=1$ and $\beta=-1/2$. The associated outcomes are illustrated in Figure \ref{fig:PlotPoly} and errors are detailed in Tables \ref{tab:AccCheby1}, \ref{tab:AccCheby2}, and \ref{tab:AccJacobi}. Within the Jacobi family, the deliberate choice of $\alpha$ and $\beta$ serves the purpose of disrupting the basis symmetry to evaluate the method's robustness in this context. Across all three cases, it is clear that standard approximations exhibit spectral accuracy, with moments converging in a spectral manner. Consequently, our analysis falls within the range of Theorem \ref{thm:SpectralAccuracyfNc}, affirming spectral accuracy for constrained approximations and exact conservation of the first $M=4$ moments. This property shows clearly in the Tables mentioned earlier.

It is noteworthy that the constraining matrix $\hat{C}_k$ may exhibit pronounced ill-conditioning for large values of $Q$, potentially introducing numerical artifacts. This behaviour is illustrated in Figure \ref{fig:ConditionNumbers} where we plot the condition number of the matrix $M$ appearing in the proof of Theorem \ref{thm:SpectralAccuracyfNc} as well as the minimum over $k=1,\dots,N$ of the condition number of the matrices $\hat{C}_k$. In both cases, we observe large condition numbers as well as their increase with the number of constrained moments. In addition, we want to point out that enough modes should be considered when applying the constrained method. Indeed, a significant increase in the condition number occurs once the number of constraints surpasses the number of modes. Preconditioners may therefore be required to improve numerical stability for large numbers of constrained moments.

\begin{figure}
    \centering
    \includegraphics[width=.45\linewidth]{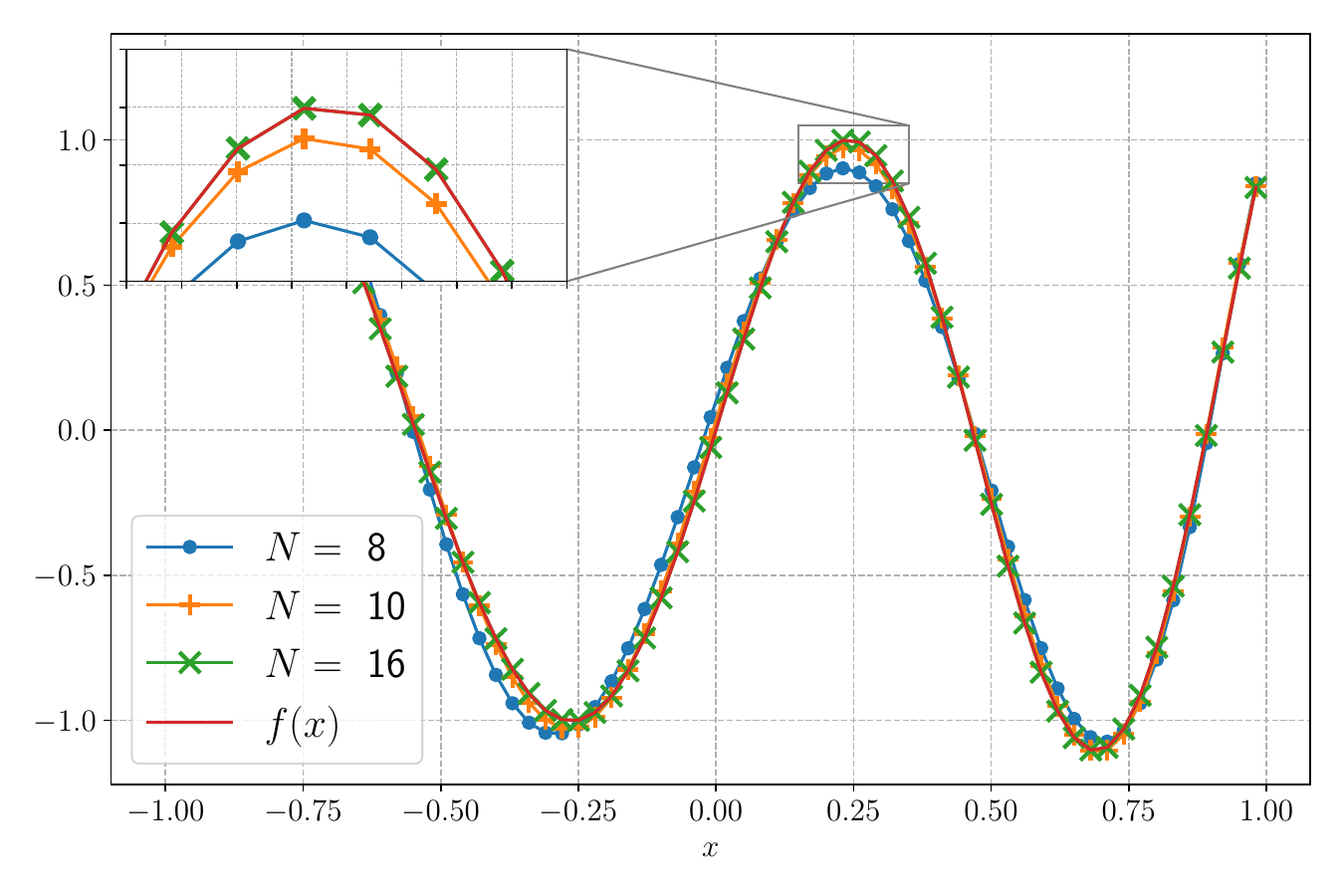}
    \includegraphics[width=.45\linewidth]{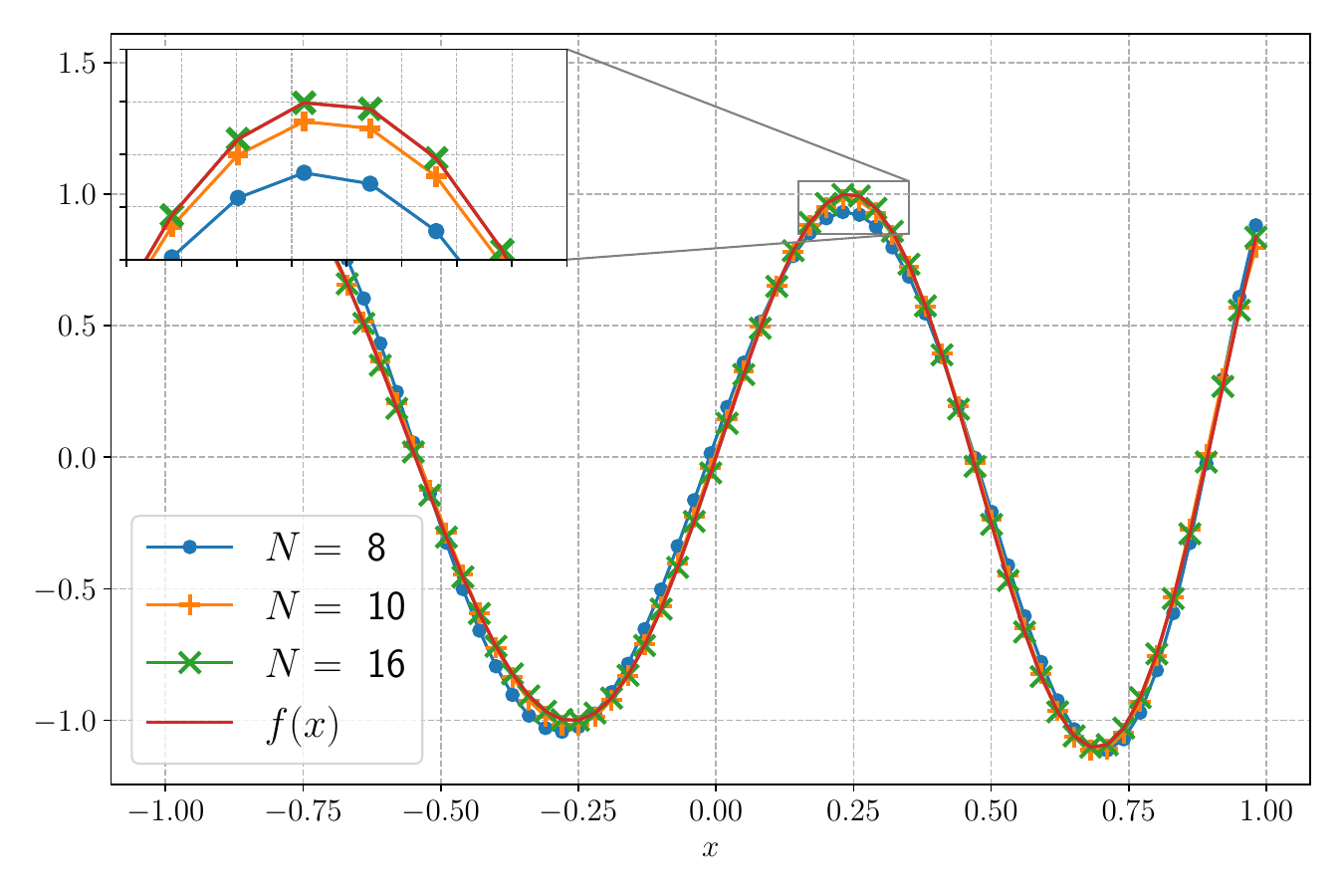}\\
    \includegraphics[width=.45\linewidth]{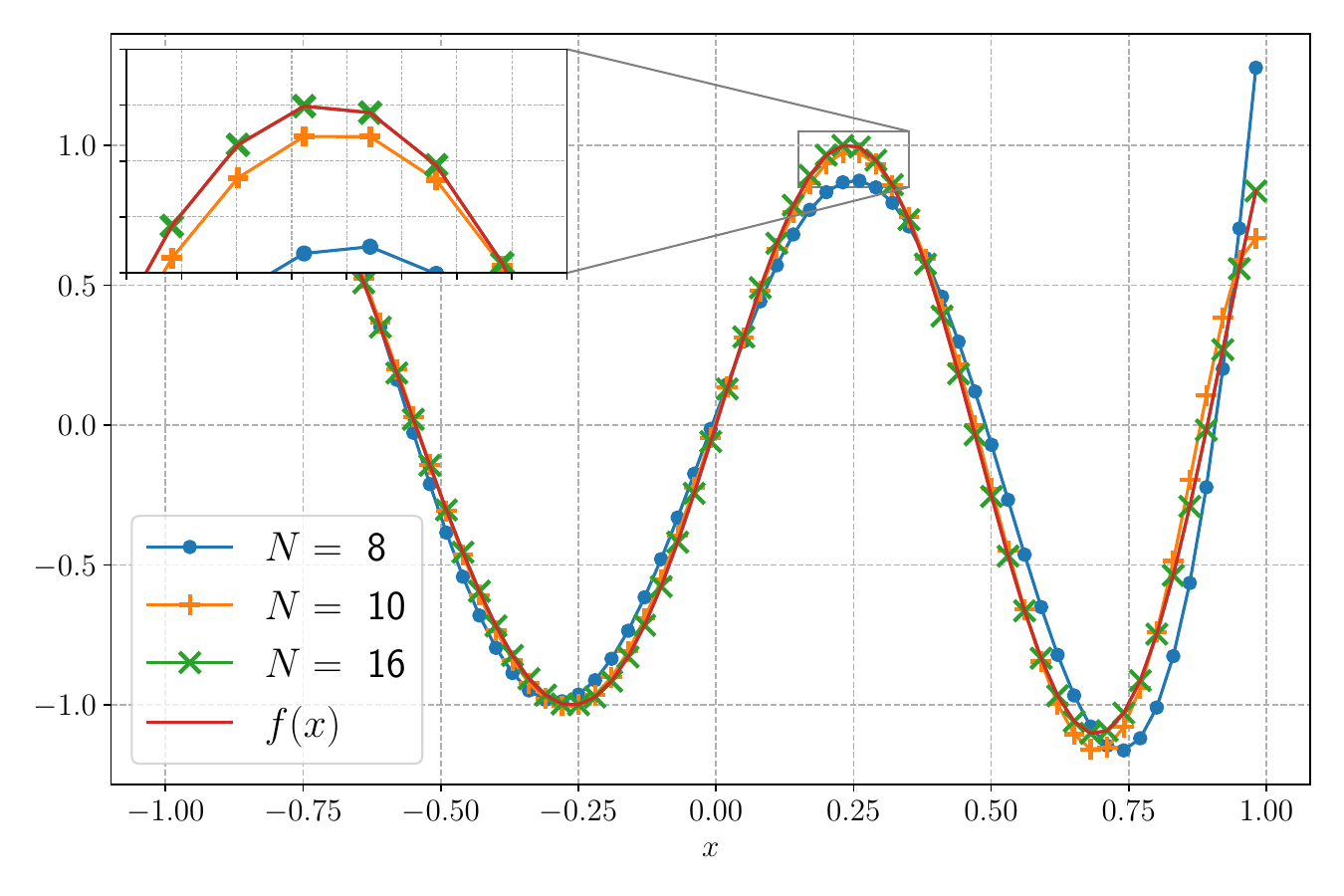}
    \caption{Constrained approximations for Chebyshev 1\textsuperscript{st} kind  (Top Left),  Chebyshev 2\textsuperscript{nd} kind (Top Right) and Jacobi $\alpha=1$, $\beta=-1/2$ (Bottom) for \eqref{testfunctionBounded}.}
    \label{fig:PlotPoly}
\end{figure}

\begin{table}
  \begin{center}
  \begin{tabular}{c|c|c|c|c|c}
    \multicolumn{6}{c}{Standard}\\
    \hline
    N & $\|f-f_N\|_2$ & $|m_0-m_{0,N}|$ & $|m_1-m_{1,N}|$ & $|m_2-m_{2,N}|$ & $|m_3-m_{3,N}|$ \\
    \hline
    8 & 1.034e-01 & 1.746e-03 & 1.404e-02 & 1.958e-02 & 1.538e-02\\
    \hline
    16 & 2.477e-05 & 1.813e-07 & 6.126e-09 & 1.858e-07 & 6.264e-09\\
    \hline
    32 & 2.094e-14 & 1.388e-17 & 5.551e-17 & 2.776e-17 & 5.551e-17 \\
    \multicolumn{6}{c}{Constrained}\\
    \hline
    N & $\|f-f_N^c\|_2$ & $|m_0-m_{0,N}^c|$ & $|m_1-m_{1,N}^c|$ & $|m_2-m_{2,N}^c|$ & $|m_3-m_{3,N}^c|$ \\
    \hline
    8 & 1.039e-01 & 5.551e-17 & 5.551e-17 & 0. & 5.551e-17\\
    \hline
    16 & 2.477e-05 & 0. & 0. & 2.776e-17 & 0.\\
    \hline
    32 & 2.093e-14 & 2.776e-17 & 5.551e-17 & 2.776e-17 & 5.551e-17 \\
  \end{tabular}
  \end{center}
  \caption{Error Analysis: Standard vs. Conservative with Chebyshev 1\textsuperscript{st} kind approximation of \eqref{testfunctionBounded}.}
  \label{tab:AccCheby1}
\end{table}%

\begin{table}
  \begin{center}
  \begin{tabular}{c|c|c|c|c|c}
    \multicolumn{6}{c}{Standard}\\
    \hline
    N & $\|f-f_N\|_2$ & $|m_0-m_{0,N}|$ & $|m_1-m_{1,N}|$ & $|m_2-m_{2,N}|$ & $|m_3-m_{3,N}|$ \\
    \hline
    8 & 1.081e-01 & 8.262e-03 & 5.914e-03 & 8.499e-03 & 6.049e-03\\
    \hline
    16 &  2.701e-05 & 1.388e-06 & 4.889e-08 & 1.398e-06 & 4.921e-08\\
    \hline
    32 & 1.132e-14 & 8.327e-17 & 0 & 2.776e-17 & 5.551e-17\\
    \multicolumn{6}{c}{Constrained}\\
    \hline
    N & $\|f-f_N^c\|_2$ & $|m_0-m_{0,N}^c|$ & $|m_1-m_{1,N}^c|$ & $|m_2-m_{2,N}^c|$ & $|m_3-m_{3,N}^c|$ \\
    \hline
    8 & 1.029e-01 & 1.388e-17 & 0 & 0 & 0\\
    \hline
    16 & 2.629e-05 & 8.327e-17 & 0 & 2.776e-17 & 5.551e-17\\
    \hline
    32 & 1.135e-14 & 1.388e-17 & 0 & 0 & 5.551e-17\\
  \end{tabular}
  \end{center}
  \caption{Error Analysis: Standard vs. Conservative with Chebyshev 2\textsuperscript{nd} kind approximation of \eqref{testfunctionBounded}.}
  \label{tab:AccCheby2}
\end{table}%

\begin{table}
  \begin{center}
  \begin{tabular}{c|c|c|c|c|c}
    \multicolumn{6}{c}{Standard}\\
    \hline
    N & $\|f-f_N\|_2$ & $|m_0-m_{0,N}|$ & $|m_1-m_{1,N}|$ & $|m_2-m_{2,N}|$ & $|m_3-m_{3,N}|$ \\
    \hline
    8 & 2.738e-01 & 4.374e-02 & 4.507e-02 & 0.437e-02 &  4.514e-02\\
    \hline
    16 & 8.873e-05 & 7.374e-06 & 7.433e-06 & 7.373e-06 & 7.434e-06 \\
    \hline
    32 & 2.237e-12 & 8.263e-14 & 9.082e-14 & 8.271e-14 & 9.093e-14 \\
    \multicolumn{6}{c}{Constrained}\\
    \hline
    N & $\|f-f_N^c\|_2$ & $|m_0-m_{0,N}^c|$ & $|m_0-m_{1,N}^c|$ & $|m_2-m_{2,N}^c|$ & $|m_3-m_{3,N}^c|$ \\
    \hline
    8 & 1.787e-01 & 6.939e-17 & 0 & 2.776e-17 & 5.551e-17\\
    \hline
    16 & 6.695e-05 & 1.110e-16 & 0 & 1.110e-16 & 5.551e-17\\
    \hline
    32 & 1.868e-12 & 1.388e-16 & 5.551e-17 & 5.551e-17 & 0\\
  \end{tabular}
  \end{center}
  \caption{Error Analysis: Standard vs. Conservative with Jacobi ($\alpha=1$, $\beta=-\frac{1}{2}$) approximation of \eqref{testfunctionBounded}.}
  \label{tab:AccJacobi}
\end{table}%

\begin{figure}[!ht]
  \centering
  \includegraphics[width=.45\linewidth]{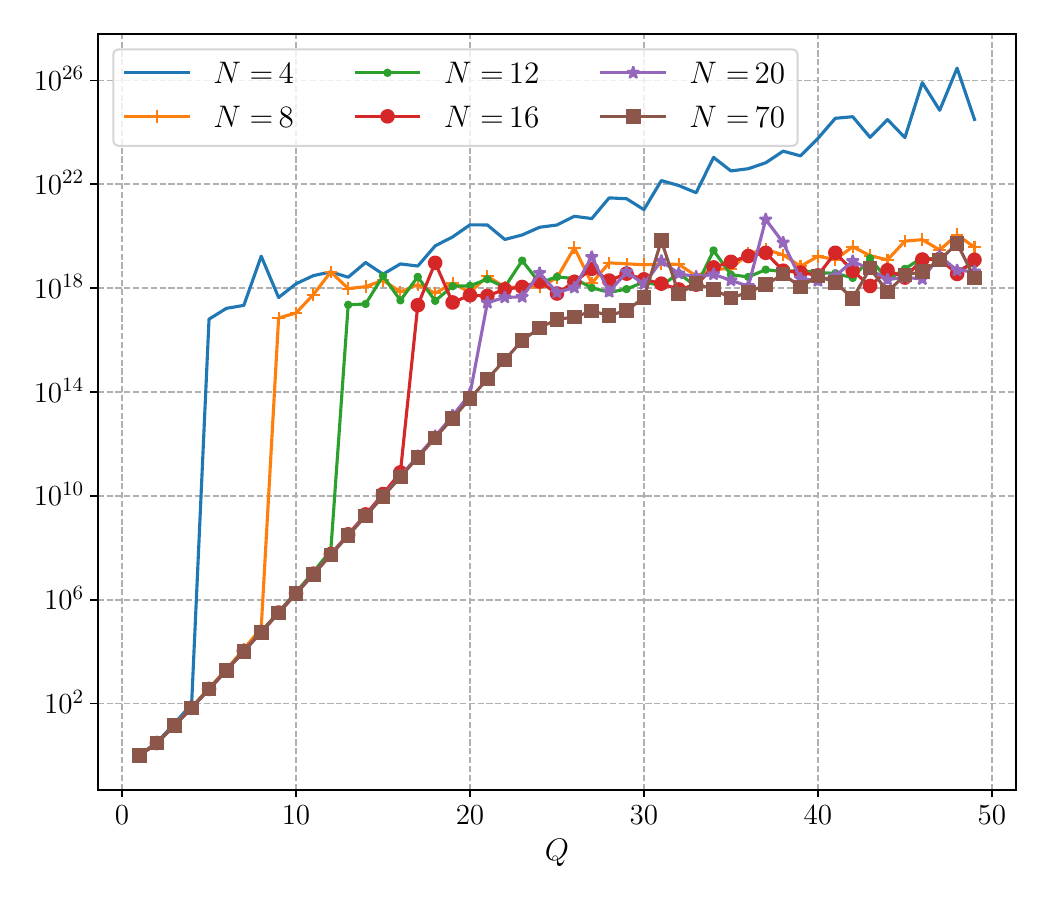}
  \includegraphics[width=.45\linewidth]{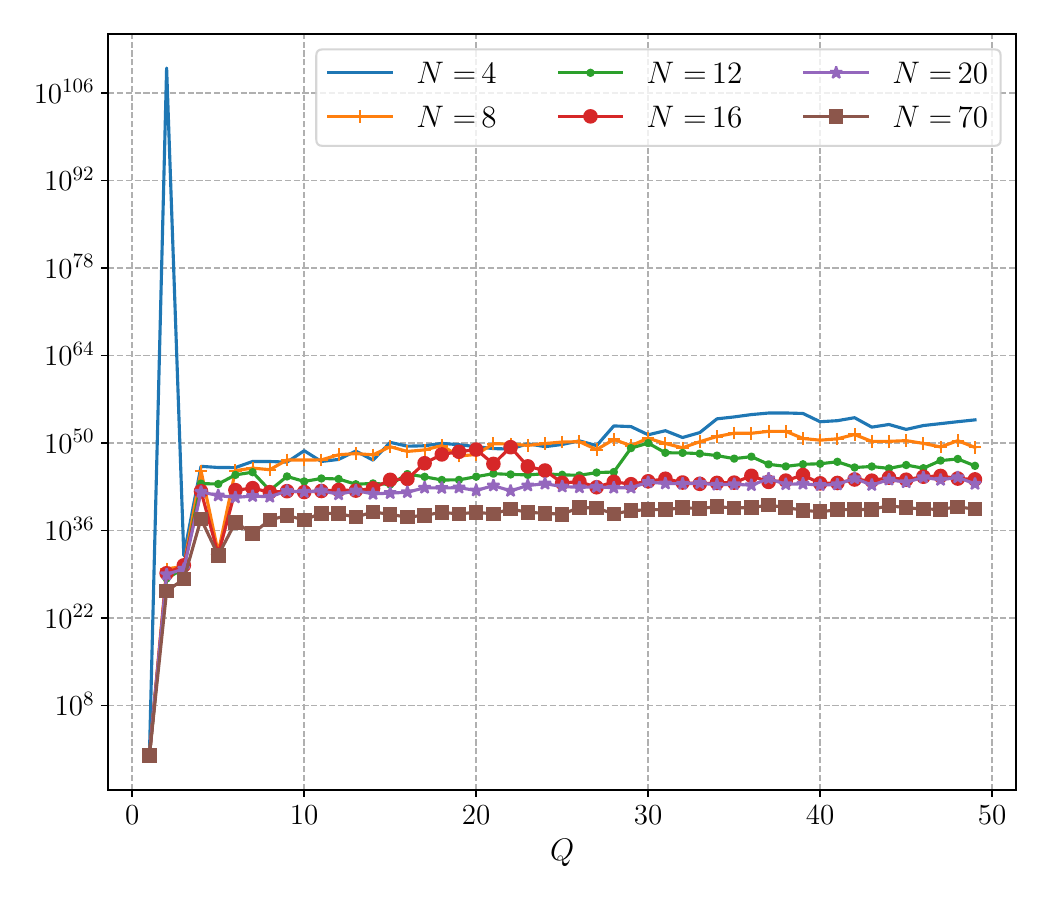}
  \caption{ Condition number of the matrix M (Left), and minimum of the condition number of the matrices $\hat{C}_k$ (Right) as functions of $Q$ for Chebyshev 2\textsuperscript{nd} kind polynomials}.
  \label{fig:ConditionNumbers}
\end{figure}

\paragraph{Unbounded domain: $\mathbb{R}$}
Continuing, we extend our approximation to $\mathbb{R}$ utilizing Hermite functions. Let $H_k$ denote the $k$\textsuperscript{th} Hermite polynomial. We define the  $k$\textsuperscript{th} symmetrically weighted Hermite functions as 
\begin{equation}
    \psi_k(x)=H_k(x)e^{-x^2/2}.
\end{equation} 
Specifically, we want to approximate the difference between two Gaussian functions deliberately crafting a non-symmetric outcome:
\be\label{testfunctionHermite}
  f(x) = \frac{3}{\sqrt{2\pi}}e^{(x+3)/2}-\frac{1}{\sqrt{\pi}}e^{(x-2)},\quad x\in\mathbb{R}.
\ee

\begin{figure}
    \centering
    \includegraphics[width=.6\linewidth]{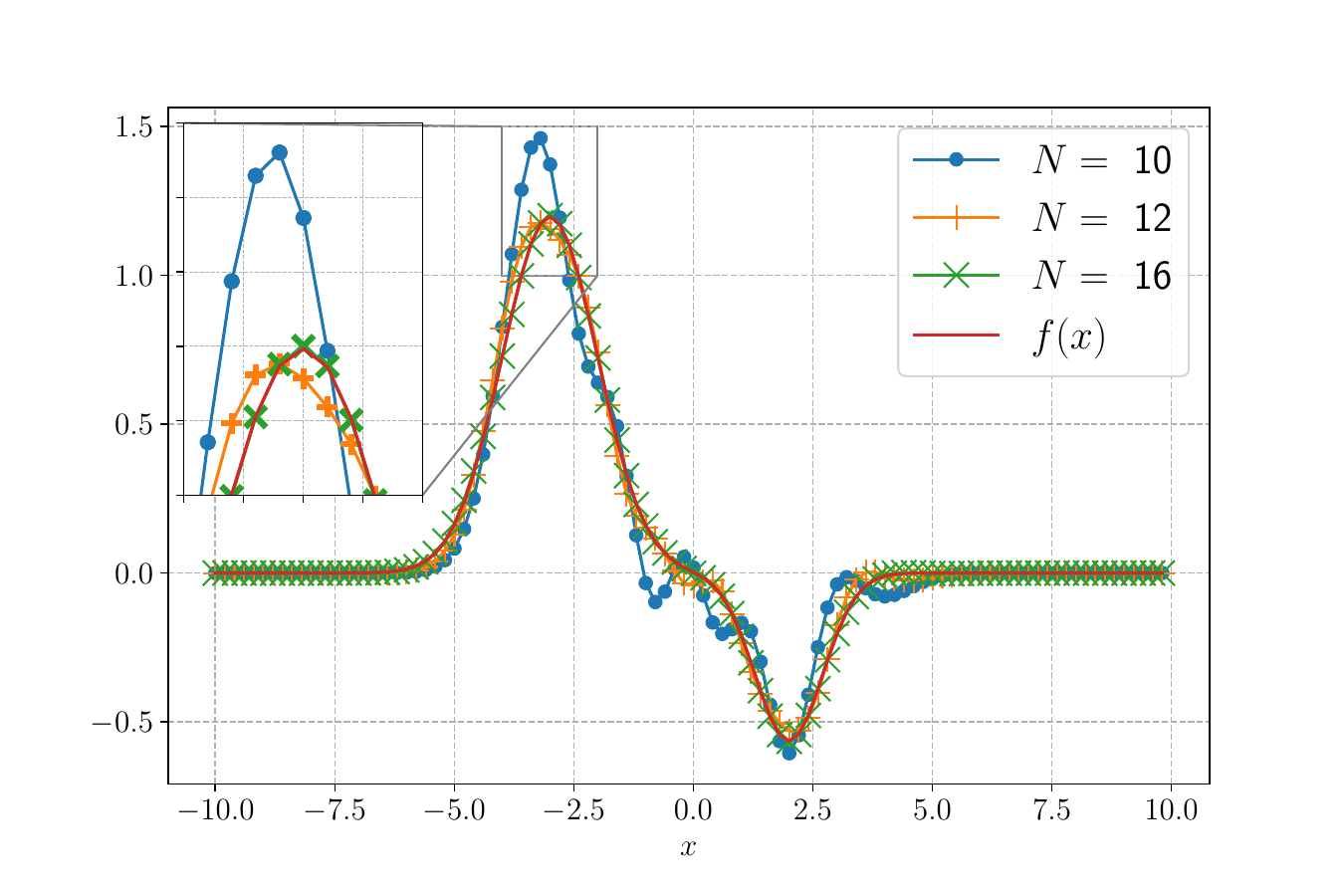}
    \caption{Constrained Hermite approximation for \eqref{testfunctionHermite}.}
    \label{fig:PlotHermite}
\end{figure}

We illustrate the results in Figure \ref{fig:PlotHermite} and, again, the results (See Table \ref{tab:AccHermite}) on the standard approximation ensures that we are within the scope of Theorem \ref{thm:SpectralAccuracyfNc}. As predicted, the conservative method exhibits spectral accuracy, along with conservation of moments.

\begin{table}
  \begin{center}
  \begin{tabular}{c|c|c|c|c|c}
    \multicolumn{6}{c}{Standard}\\
    \hline
    N & $\|f-f_N\|_2$ & $|m_0-m_{0,N}|$ & $|m_1-m_{1,N}|$ & $|m_2-m_{2,N}|$ & $|m_3-m_{3,N}|$ \\
    \hline
    8 & 1.264e-02 & 0.583e-01 &  1.718e00 & 1.130e+01 & 3.686e+01\\
    \hline
    16 & 1.716e-03 & 6.692e-03 & 2.060e-02 & 2.293e-01 & 7.457e-01 \\
    \hline
    32 & 4.995e-09 & 1.698e-09 & 5.175e-09 & 9.470e-08 & 3.552e-07 \\
    \multicolumn{6}{c}{Constrained}\\
    \hline
    N & $\|f-f_N^c\|_2$ & $|m_0-m_{0,N}^c|$ & $|m_1-m_{1,N}^c|$ & $|m_2-m_{2,N}^c|$ & $|m_3-m_{3,N}^c|$ \\
    \hline
    8 & 4.324e-01 & 0 & 1.776e-15 & 3.553e-15 & 2.842e-14\\
    \hline
    16 & 2.879e-03 & 2.220e-16 & 1.776e-15 & 0 & 0\\
    \hline
    32 & 5.055e-09 & 2.220e-16 & 1.776e-15 & 0 & 0 \\
  \end{tabular}
  \end{center}
  \caption{Error Analysis: Standard vs. Conservative Hermite function approximation for \eqref{testfunctionHermite}.}
  \label{tab:AccHermite}
\end{table}%

\paragraph{Unbounded domain: $\mathbb{R}^+$}
We now look at the approximation on $\mathbb{R}^+$ using symmetric Laguerre functions. We define for a Laguerre polynomial $\mathcal{L}_k$ the $k$\textsuperscript{th} Laguerre function as
\be
    \xi_k(x)=\mathcal{L}_k(x)e^{-x/2}.
\ee
Let us consider the function
\be\label{testfunctionLaguerre}
  f(x) = (x^3-2x+\sin(x))e^{-x},\quad x\in\mathbb{R}^+.
\ee

\begin{figure}
    \centering
    \includegraphics[width=.6\linewidth]{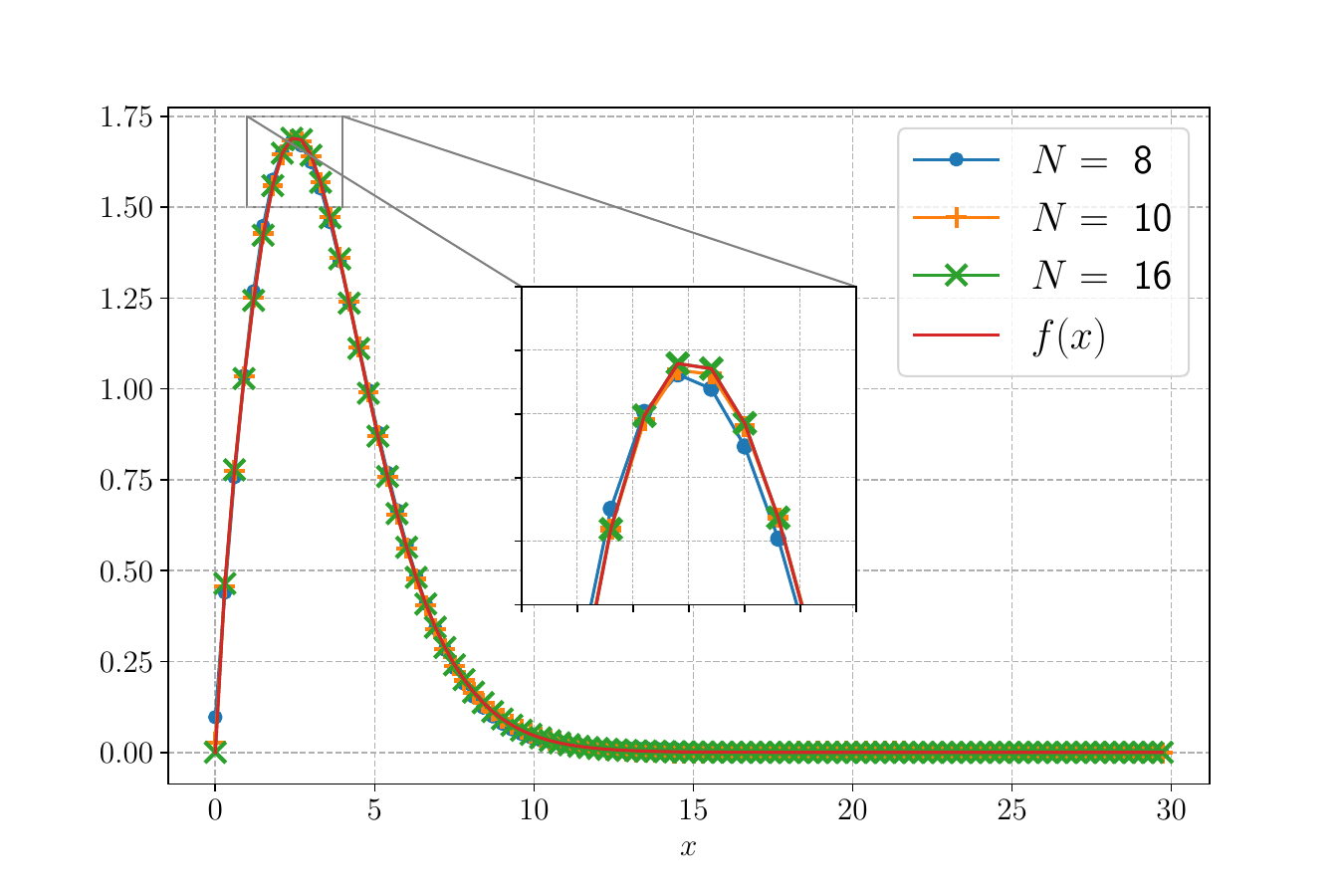}
    \caption{Constrained Laguerre approximation for \eqref{testfunctionLaguerre}.}
    \label{fig:PLotLaguerre}
\end{figure}

The approximation is illustrated in Figure \ref{fig:PLotLaguerre} and, consistent with prior cases, the observations in Table \ref{tab:AccLaguerre} shows the spectral accuracy of the conservative method. Although errors on the moments exhibit a slight increase compared to previous cases, it is noteworthy that this discrepancy can be attributed to numerical artifacts arising from the ill-conditioning of the constraining matrix $\hat{C}_k$ and numerical integration intricacies. 
\begin{table}
  \begin{center}
  \begin{tabular}{c|c|c|c|c|c}
    \multicolumn{6}{c}{Standard}\\
    \hline
    N & $\|f-f_N\|_2$ & $|m_0-m_{0,N}|$ & $|m_1-m_{1,N}|$ & $|m_2-m_{2,N}|$ & $|m_3-m_{3,N}|$ \\
    \hline
    8 & 3.761e-02 & 3.780e-02 & 1.225e+00 & 3.968e+01 & 1.295e+03\\
    \hline
    16 & 9.552e-05 & 1.227e-04 & 7.954e-03 & 5.157e-01 & 3.350e+01\\
    \hline
    32 & 2.337e-11 & 3.174e-11 & 4.092e-09 & 5.276e-07 & 6.807e-05 \\
    \multicolumn{6}{c}{Constrained}\\
    \hline
    N & $\|f-f_N^c\|_2$ & $|m_0-m_{0,N}^c|$ & $|m_1-m_{1,N}^c|$ & $|m_2-m_{2,N}^c|$ & $|m_3-m_{3,N}^c|$ \\
    \hline
    8 & 4.834e-02 & 8.882e-16 & 3.553e-15 & 3.126e-13 & 3.638e-12 \\
    \hline
    16 & 1.174e-04 & 8.882e-16 & 0 & 5.684e-14 & 9.095e-13\\
    \hline
    32 & 2.622e-11 & 0 & 3.553e-15 & 2.842e-14 & 2.274e-13 \\
  \end{tabular}
  \end{center}
  \caption{Error Analysis: Standard vs. Conservative using Laguerre Function approximation for \eqref{testfunctionLaguerre}.}
  \label{tab:AccLaguerre}
\end{table}%

 To conclude this first test let us again consider Laguerre functions but for the approximation of
 \begin{equation}\label{testfunctionLaguerreBad}
  f(x) = \frac{1}{\sqrt{2\pi\sigma}x}\exp\left(-\frac{(\ln(x)-\mu)^2}{2\sigma}\right),\quad x\in\mathbb{R}^{+,*}.
 \end{equation}
where $\sigma=0.2$ and $\mu=\ln(40)-0.2$. This function is badly approximated by negative exponentials at infinity, yet it holds significance for Test 4. As depicted in Figure \ref{fig:ConvLaguerreBad}, spectral accuracy is not attained neither on the standard approximation of \eqref{testfunctionLaguerreBad} nor on its moments. Consequently, this experiment falls outside the assumptions of Theorem \ref{thm:SpectralAccuracyfNc}, since one lacks both spectral accuracy properties needed. Nevertheless, it is noteworthy that the resultant conservative approximation successfully preserves the $0$\textsuperscript{th} order moment and achieves fifth-order accuracy. 

\begin{figure}
    \centering
    \includegraphics[width=.45\linewidth]{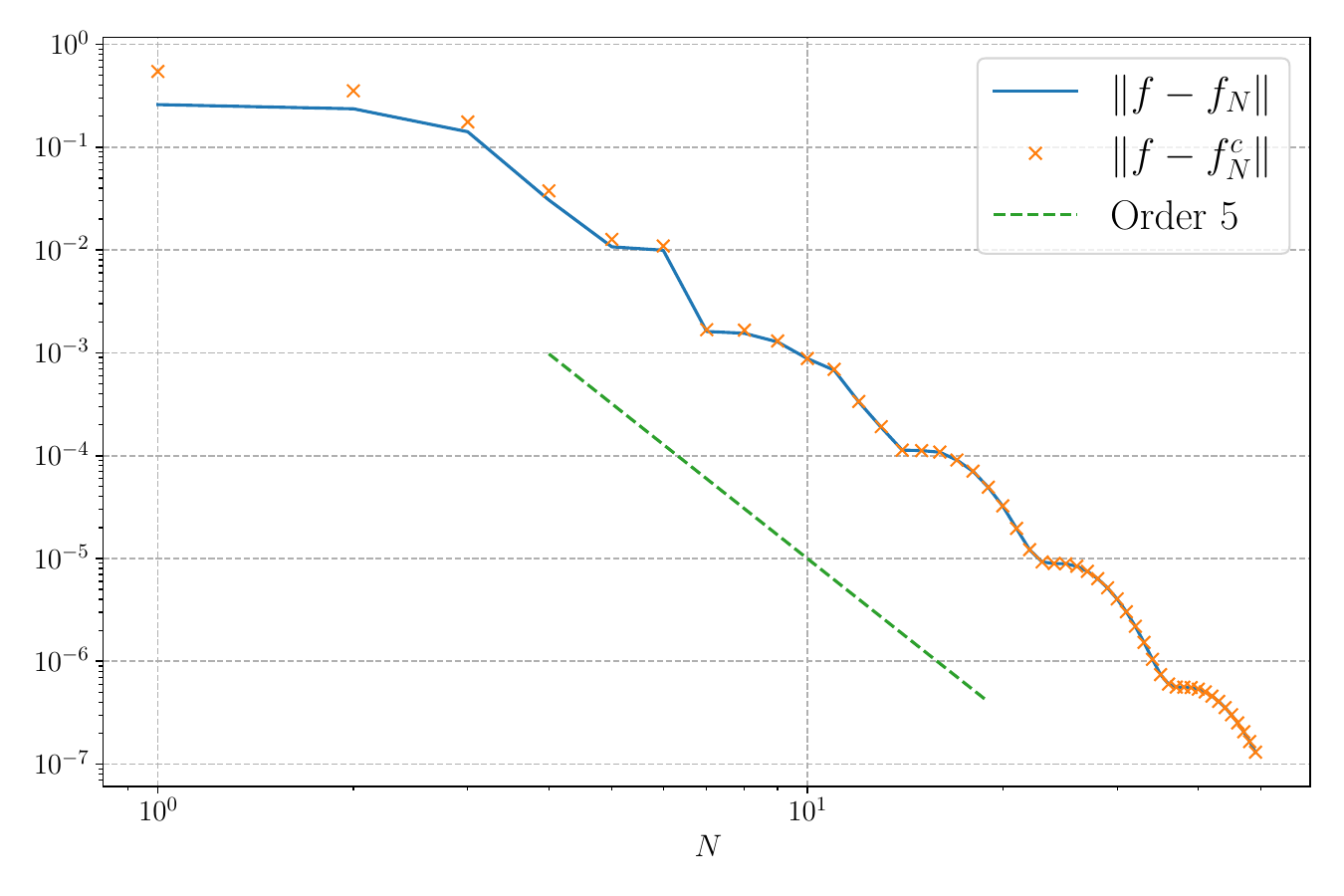}
    \includegraphics[width=.45\linewidth]{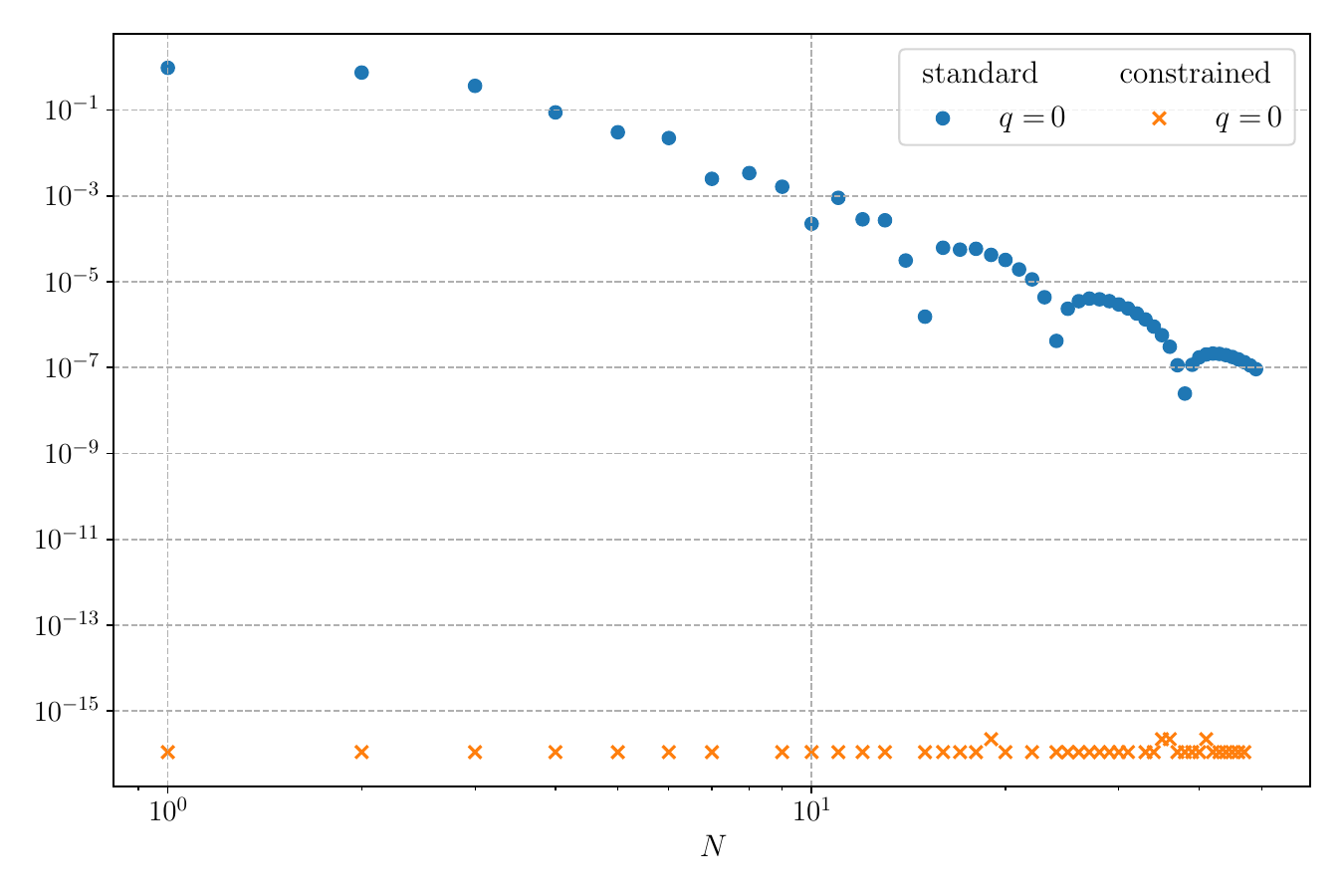}
    \caption{Error Analysis: Solution and first Moment using Laguerre functions for \eqref{testfunctionLaguerreBad}.}
    \label{fig:ConvLaguerreBad}
\end{figure}

In the next tests, we apply the conservative method in a PDE setting to various Fokker-Planck equations.

\subsection{Test 2: Kinetic Fokker-Planck equation}
Let us now consider the following kinetic Fokker-Planck equation on $\mathbb{R}$:
\be\label{FokkerPLanck}
\left\lbrace\begin{aligned}
  &\frac{\partial}{\partial t}f=\partial_v\left((v-\mu)f+T\partial_v f\right):=L_{FP}(f),\\
  &f(0,v) = f^0(v).
\end{aligned}\right.
\ee
In this case, the unknown $f(t,v)$ describes the amount of particles moving with velocity $v$ at time $t$. In this setting, the particle will be forced towards a mean velocity $\mu$ and an average temperature $T$. It is well known that this equation admits a steady state $f_\infty$ given by a Maxwellian distribution:
\begin{equation}\label{FPequilibrium}
  f_\infty=\frac{\rho}{\sqrt{2\pi T}}\exp\left(\frac{-(v-\mu)^2}{2T}\right),
\end{equation}
where $\rho$ is the total mass of particles. In the following, let us consider the initial data
\be
f^0(v)=\frac{1}{\sqrt{\pi}}\left(e^{-(v+2)^2}+e^{-(v-2)^2}\right)
\ee
This distribution admits a mass $\rho^0$, mean velocity $\mu^0$ and temperature $T^0$ given by
\begin{equation}\label{eq:Momentsf0FP}
    \rho^0 = \int_\R f^0(v)\,\mathrm{d}v,\quad\mu^0  = \frac{1}{\rho^0}\int_\R vf^0(v)\,\mathrm{d}v,\quad T^0 = \frac{1}{\rho^0}\int_\R |v-\mu^0|^2f^0(v)\,\mathrm{d}v.
\end{equation}
In addition, we set the parameters of $L_{FP}$ in \eqref{FokkerPLanck} as
\be\label{eq:choiceParamFP}
    \quad\mu=\mu^0 \text{ and} \quad T=T^0.
\ee 
With these choice of parameters, one can in particular expect preservation of mass, mean velocity and temperature.
We now consider the symmetrically weighted Hermite functions: $\psi_k(v)=H_k(v)e^{-v^2/2}$. By definition of the Hermite polynomials, we have the relations:
\be
\begin{aligned}
  H_k &= 2vH_{k-1}-2(k-1)H_{k-2}\\
  H_k' &= 2kH_{k-1}.
\end{aligned}
\ee
Therefore, the Hermite functions $\psi_k$ satisfy:
\be
\begin{aligned}
  \psi_k &= 2v\psi_{k-1}-2(k-1)\psi_{k-2},\\
  \psi_k' &= k\psi_{k-1}-\frac{1}{2}\psi_{k+1},\\
  \psi_k'' &= k(k-1)\psi_{k-2}-\left(k+\frac{1}{2}\right)\psi_{k}+\frac{1}{4}\psi_{k+2}.
\end{aligned}
\ee
Using these relations, one can explicitly compute
\be
\begin{aligned}
  L_{FP}(\psi_k)&=\psi_k + (v-\mu)\psi_k' + T\psi_k''\\
  &= k(k-1)(1+T)\psi_{k-2}-\frac{\mu}{2}\psi_{k-1}+\left(-kT-\frac{(T-1)}{2}\right)\psi_k\\
  &\quad+\frac{\mu}{2}\psi_{k+1}+\frac{1}{4}\left(T-1\right)\psi_{k+2}.
\end{aligned}
\ee
We can now proceed to use a Galerkin method to solve \eqref{FokkerPLanck}. We denote by $\mathcal{P}_N$ and $\mathcal{P}_N^c$ the standard and constrained Hermite function approximation. We want to find $f_N\in S_N$ solution to
\begin{equation}\label{eq:galerkinFP}
    \left\lbrace\begin{aligned}
    &\frac{\partial}{\partial t}f_N = \mathcal{P}_N\left(L_{FP}(f_N)\right),\\
    &f_N(0,v)=\mathcal{P}_N(f^0)(v).
\end{aligned}\right.
\end{equation}
The moment constrained problem writes
\begin{equation}\label{eq:galerkinFPc}
    \left\lbrace\begin{aligned}
    &\frac{\partial}{\partial t}f_N^c = L_{FP,N}^c(f_N^c),\\
    &f_N(0,v)=\mathcal{P}_N^c(f^0)(v),
\end{aligned}\right.
\end{equation}
where $L_{FP,N}^c$ is solution to
\begin{equation}\label{eq:constrOperatorFP}
    L_{FP,N}^c(f) = {\rm argmin} \left\{\| g_N - L_{FP}(f) \|^2_{L^2_\omega}\, :\,g_N\in S_N,\,\, \langle g_N,v^q \rangle = 0,\,q=0,1,2,3\right\}.
\end{equation}
Let us emphasize that \eqref{eq:constrOperatorFP} corresponds to \eqref{def:constrainedApprox} where we ensure the first three moments of $L_{FP}$ to be zero. For the time stepping method, we utilize a classical fourth order Runge-Kutta (RK4) method with fixed time step $\Delta t =10^{-4}$. In Figure \ref{SnapFP} one can observe a very good agreement between the two overlapping solutions as well as a trend towards the steady state $f_\infty$. This observation suggests that the constrained approximation may be able to finely capture long-time properties of the solution.
\begin{figure}
  \centering
   \includegraphics[width=.9\linewidth]{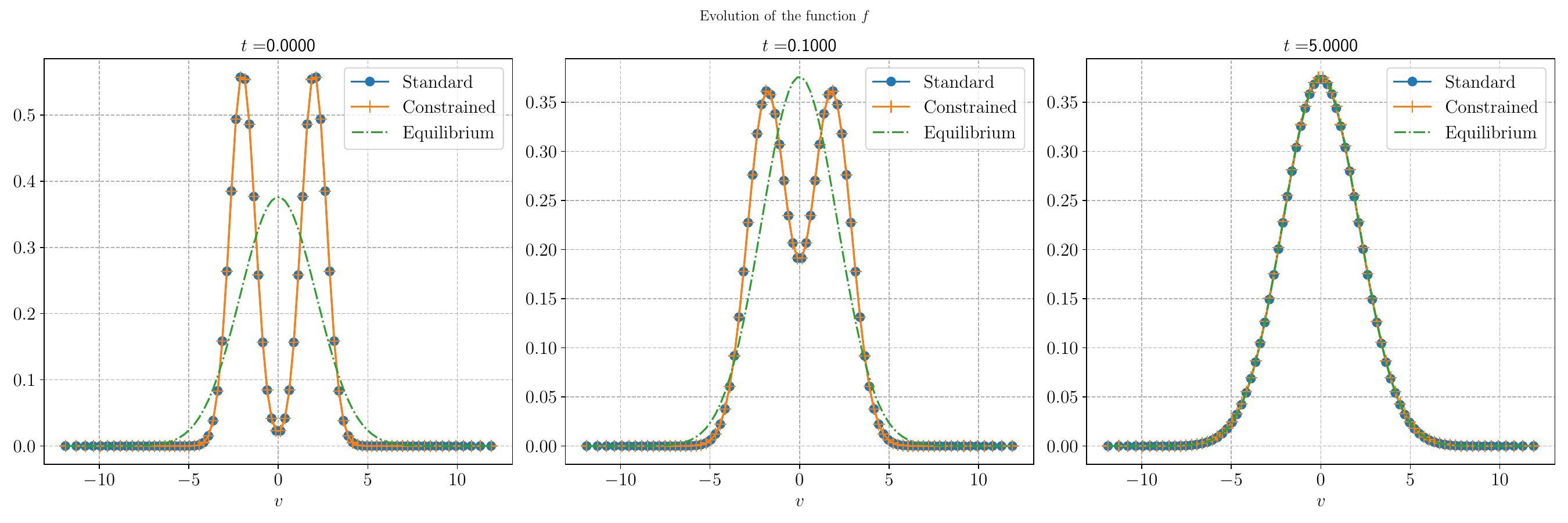}
  \label{SnapFP}
  \caption{Fokker-Planck model: Snapshots of the distribution at $t=0,0.1$ and $5$, $N=32$.}
\end{figure}

We also present in Figure \ref{fig:nonzerobulkV} a test case for the following initial data that has a non-zero mean velocity:
\begin{equation*}
  f^0(v)=\frac{1}{\sqrt{\pi}}\left(e^{-(v+3)^2}+e^{-v^2}\right).
\end{equation*}
This test becomes relevant for spatially inhomogeneous kinetic problem. We observe that our constrained method is still able to nicely capture the solution, even in this more challenging case.
\begin{figure}
  \centering
  \includegraphics[width=.9\linewidth]{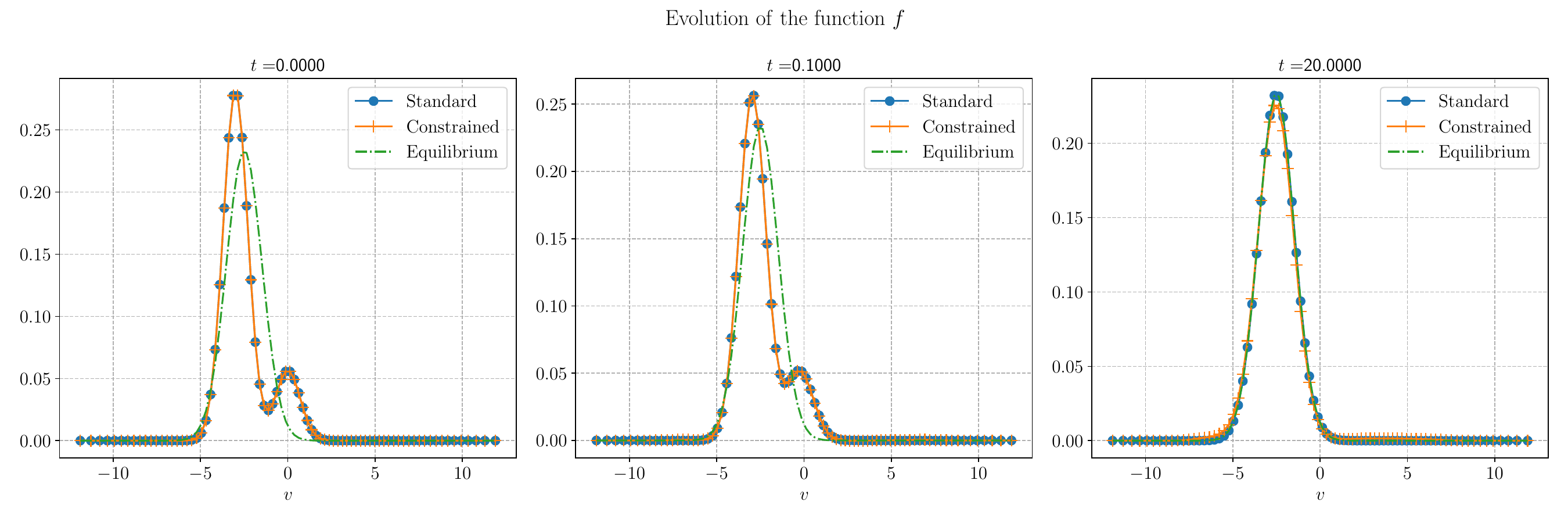}
  \caption{Fokker-Planck model, non-zero mean velocity: Snapshots of the distribution at $t=0,0.1$ and $20$, $N=32$.}
  \label{fig:nonzerobulkV}
\end{figure}

\paragraph{Spectral accuracy and conservations}
Let us now investigate the accuracy of the method. In Figure \ref{ErrMomFokkerPlanck} we set a final time $T_f=0.1$ and compare the solution to a reference one computed with $N=32$ modes. We do the same comparison for the first three moments. A first observation is that the standard method does exhibit spectral accuracy both on the solution and on the first three moments. This therefore falls within the scope of Theorem \eqref{thm:SpectralAccuracyfNc} and the constrained method is indeed spectrally accurate. Note that in this particular case, the mean velocity is $0$, hence the machine accuracy in the bottom left panel of Figure \ref{ErrMomFokkerPlanck}.
\begin{figure}
  \centering
   \includegraphics[width=.48\linewidth]{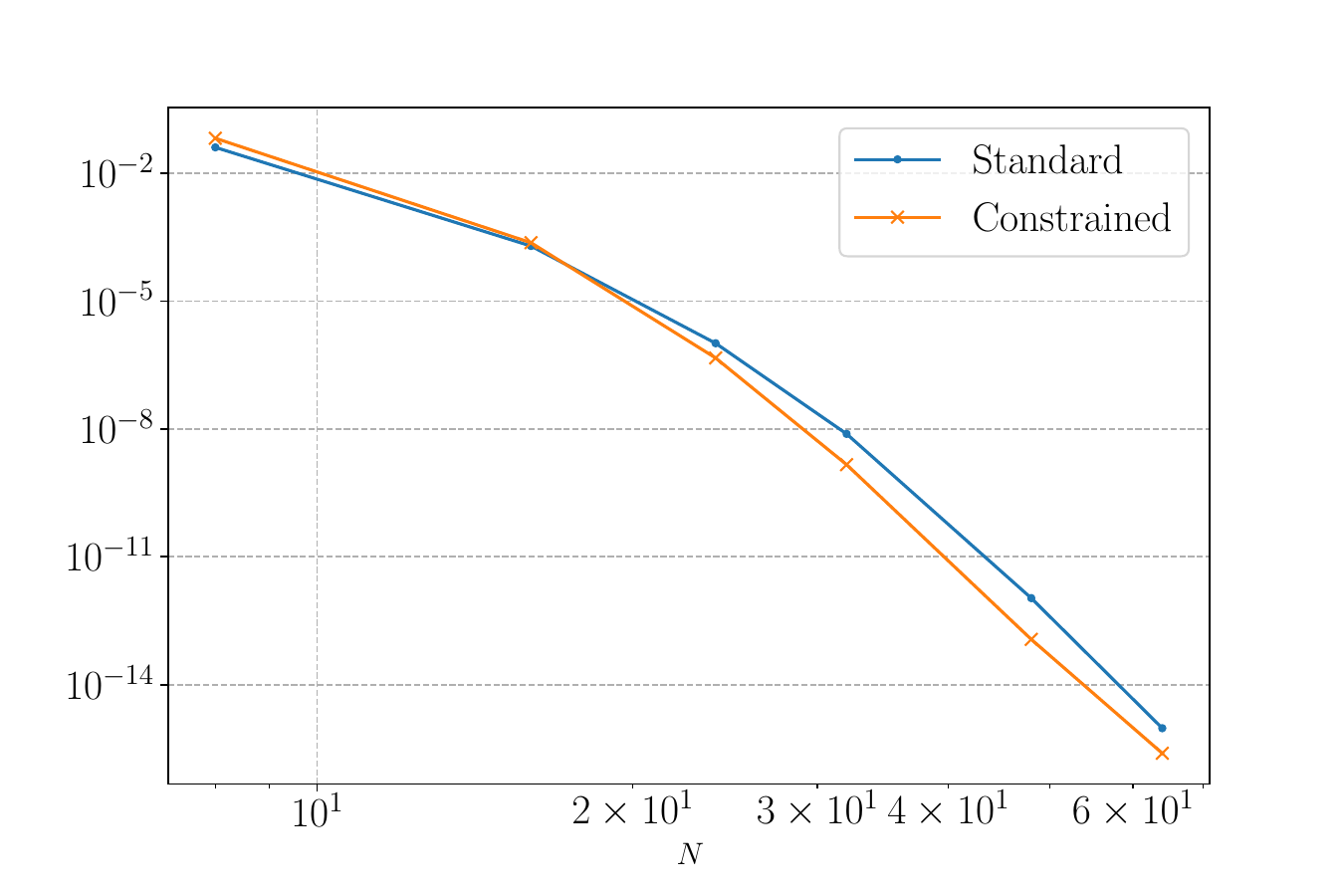}
   \includegraphics[width=.48\linewidth]{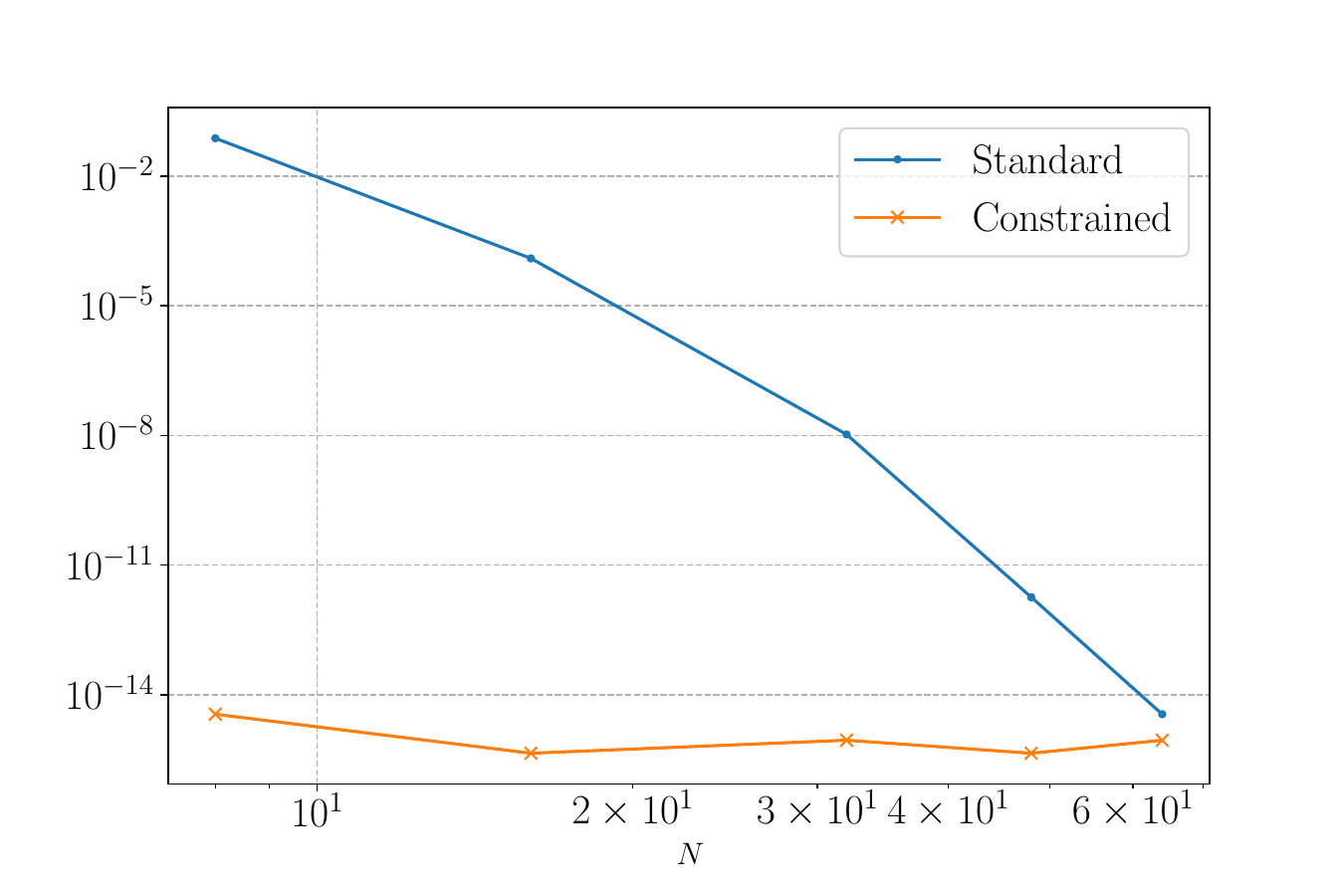}\\
   \includegraphics[width=.48\linewidth]{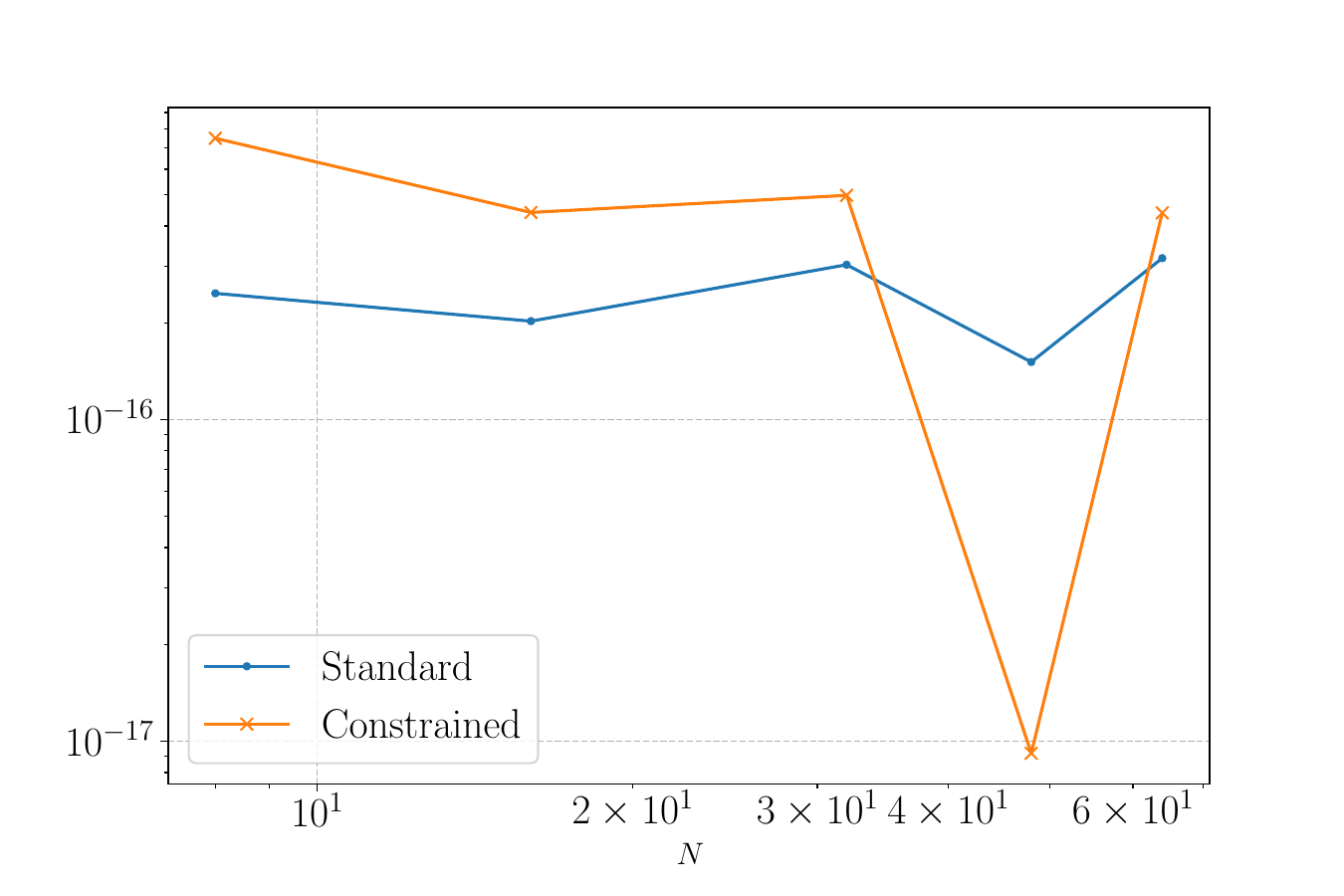}
   \includegraphics[width=.48\linewidth]{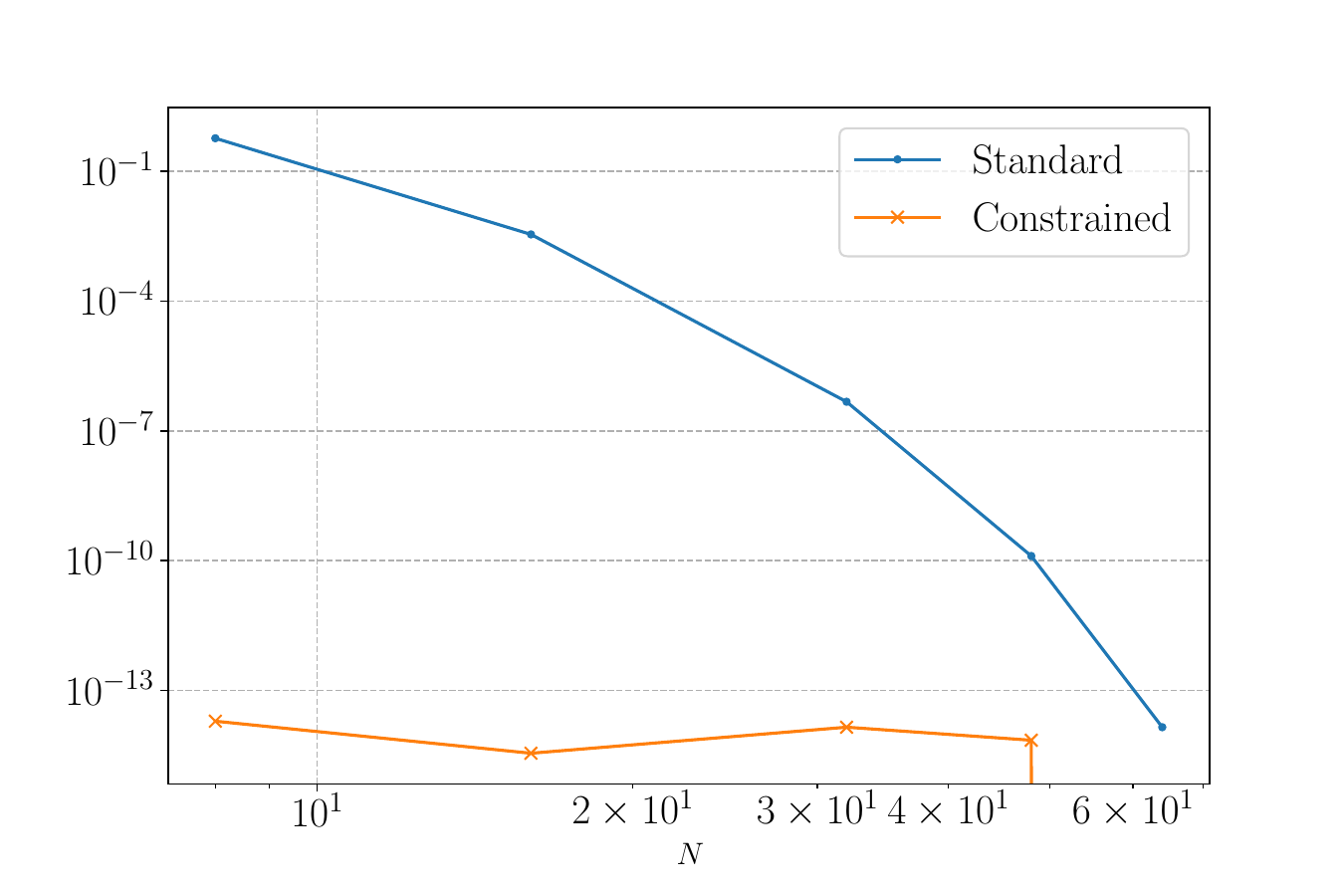}
  \label{ErrMomFokkerPlanck}
  \caption{Fokker-Planck model: Approximation error on the solution (Top Left) and absolute error on the mass (Top Right), mean velocity (Bottom Left) and temperature (Bottom Right) at $T_f=0.1$.}
\end{figure}

\paragraph{Long-time behaviour} Let us conclude this test by investigating the long time behaviour of the solution observed in Figure \ref{SnapFP}. In Figure \ref{LTB_FokkerPlanck} The top left panel shows the evolution of the norms $\|f_N(t^n)-f_\infty\|_2$ and $\|f_N^c(t^n)-f_\infty\|_2$ while the remaining panels show the variation of the moments $|\rho^n-\rho^0|$, $|\mu^n-\mu^0|$ and $|T^n-T^0|$. We observe that the constrained method indeed preserves the first three moments up to machine accuracy. As a consequence, the long-time behaviour should be better approximated. The standard approximation does seem to converge towards the steady state but ultimately, accumulation of errors on the moments overcomes the dynamic of the solution, pushing it away from the equilibrium. On the contrary, our proposed method does not exhibit this re-bounce and converges to a plateau that decreases as one increases the number of modes. One can mention that equilibrium preserving  techniques^^>\cite{FilbetPareschiRey2014, PareschiRey2020} have been developed to overcome this saturation but these are beyond the scope of this article.
\begin{figure}
  \centering
   \includegraphics[width=.48\linewidth]{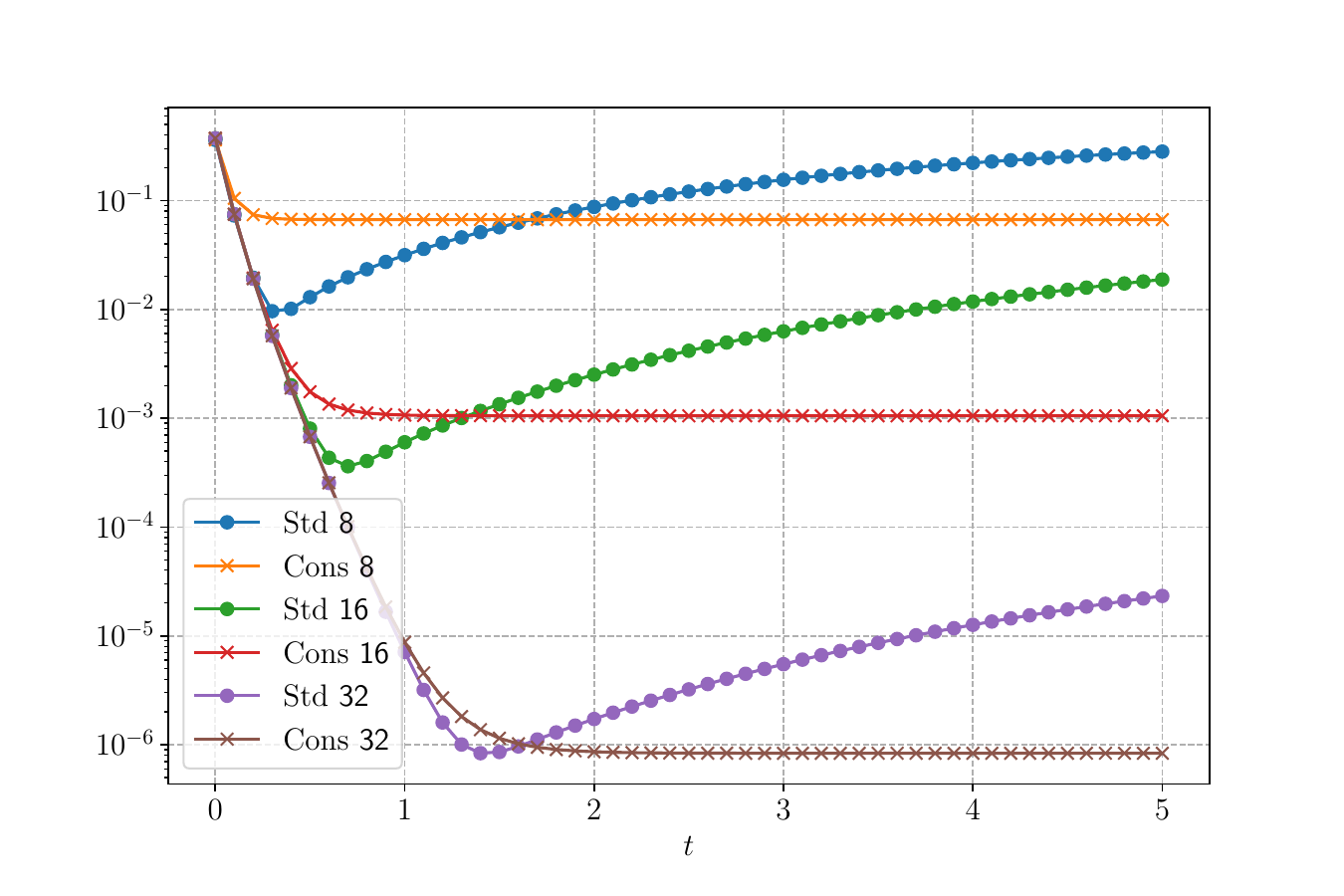}
   \includegraphics[width=.48\linewidth]{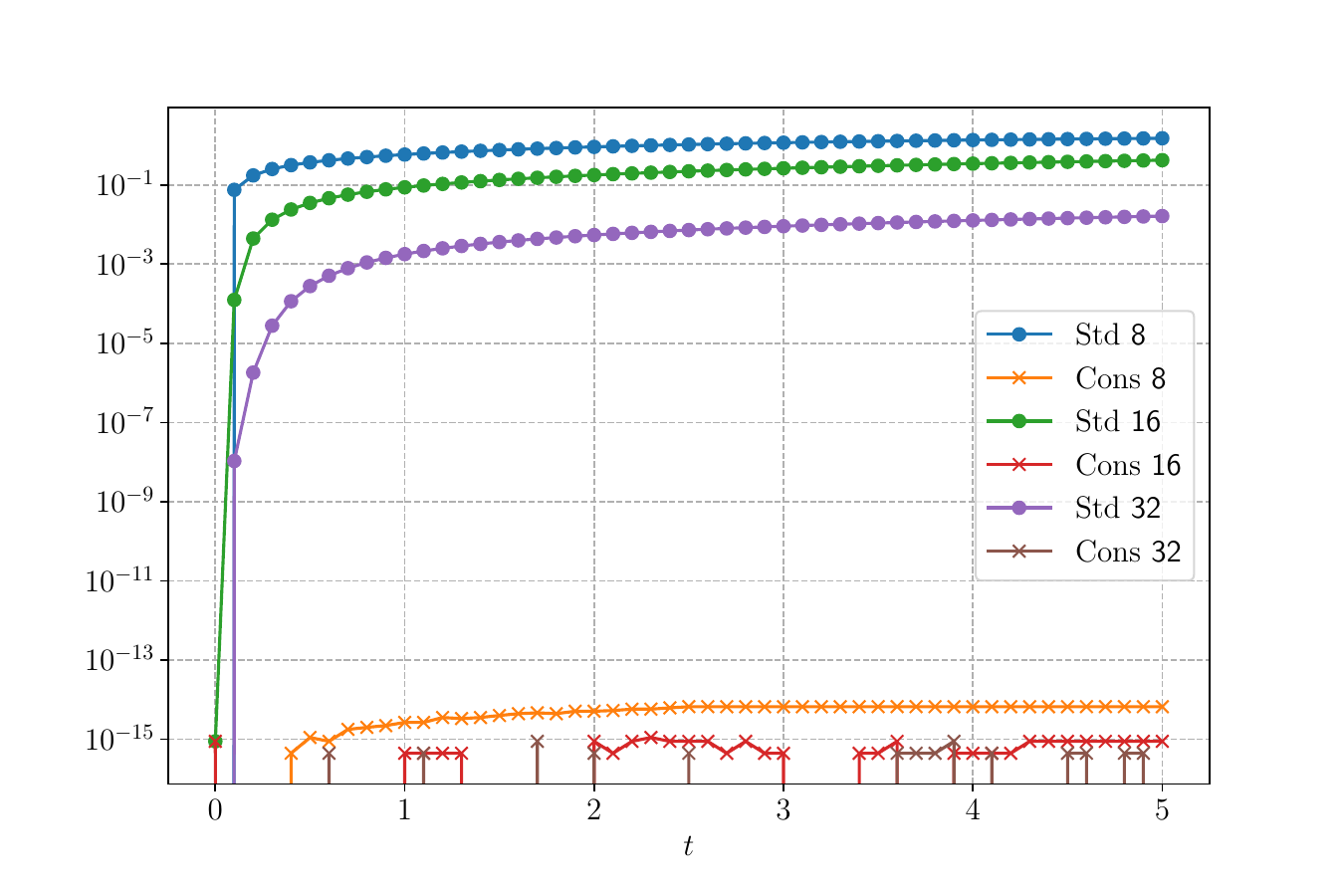}\\
   \includegraphics[width=.48\linewidth]{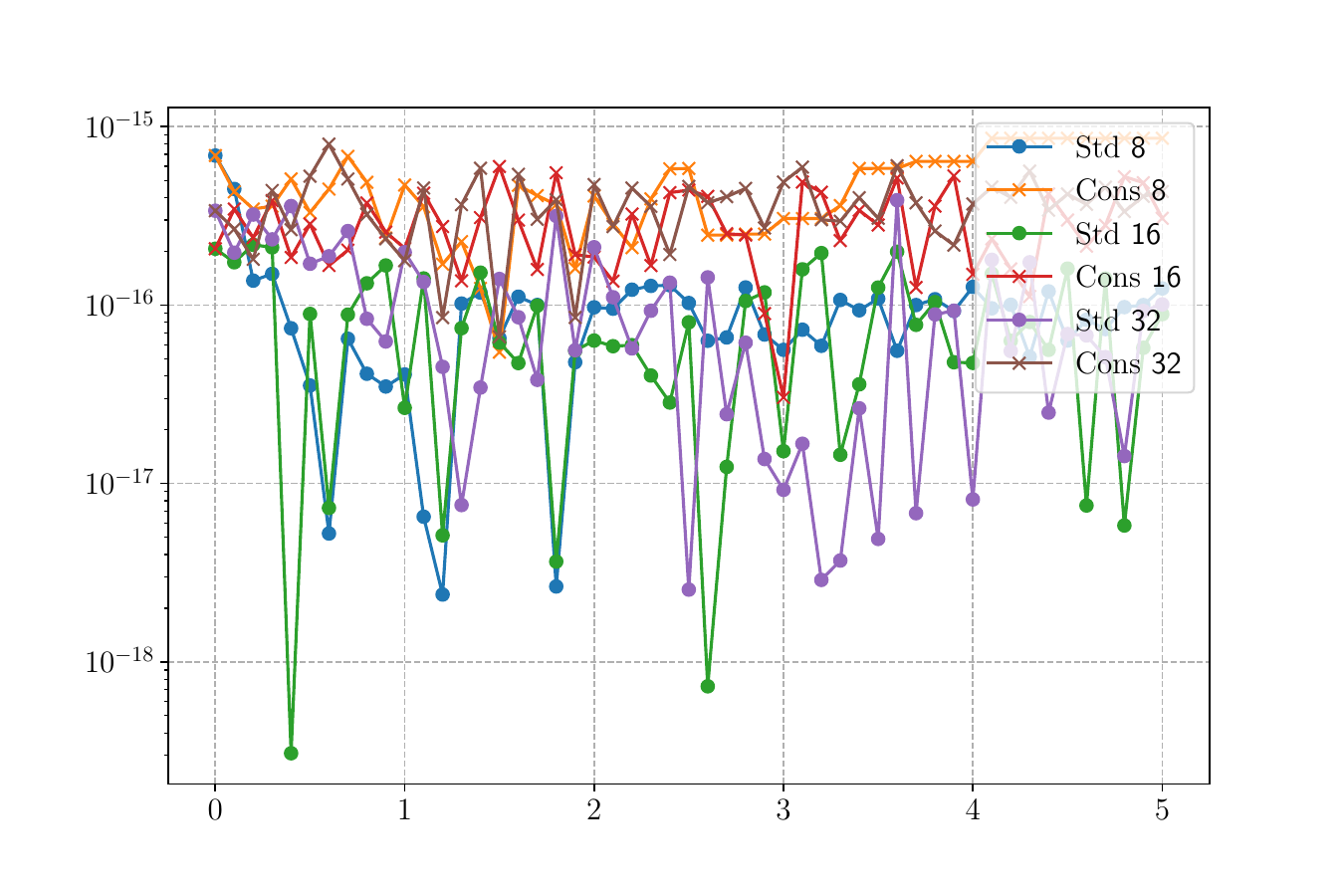}
   \includegraphics[width=.48\linewidth]{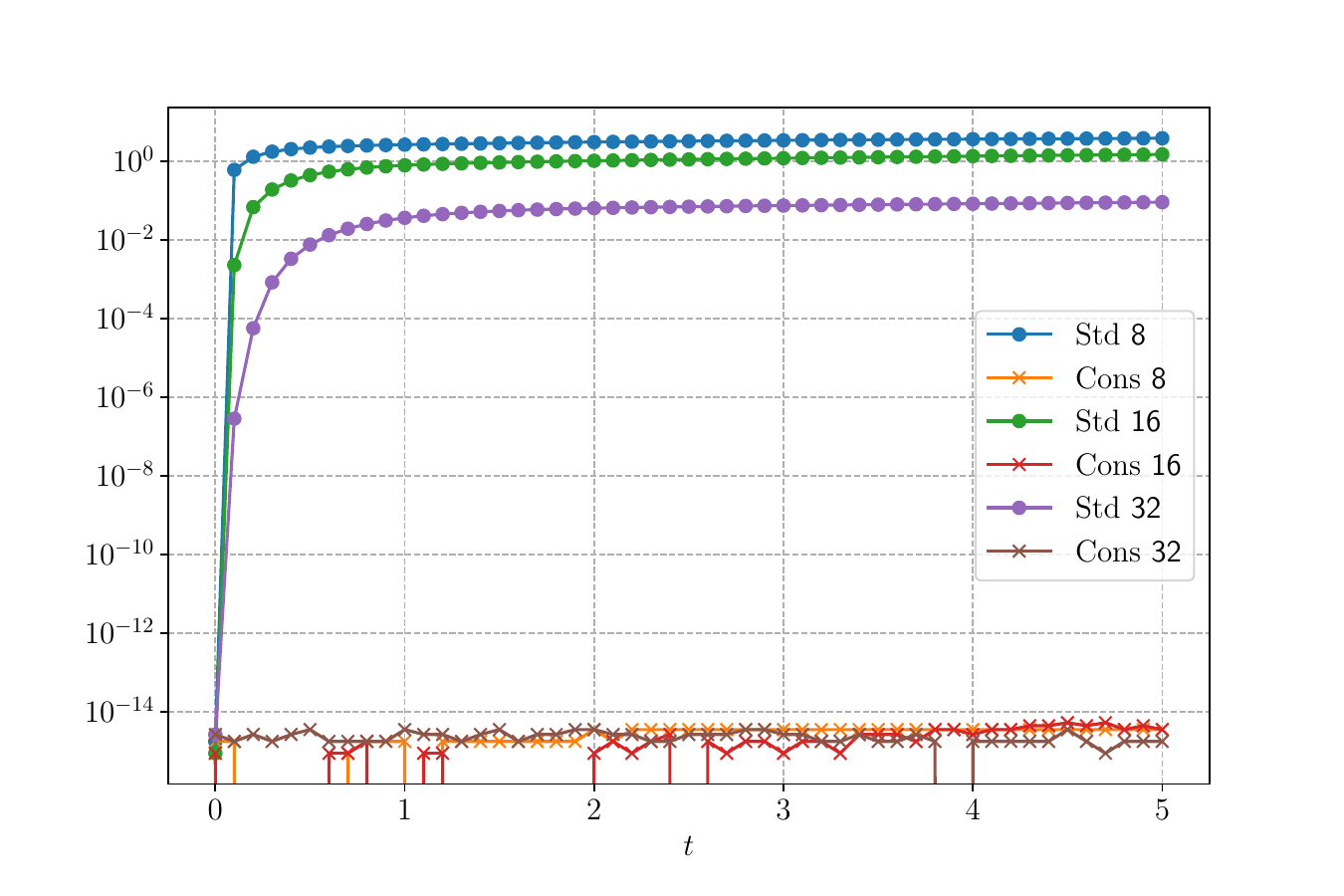}
  \label{LTB_FokkerPlanck}
  \caption{Fokker Planck model: Convergence towards the steady state (Top Left) and absolute error on the mass (Top Right), mean velocity (Bottom Left) and temperature (Bottom Right).}
\end{figure}


\subsection{Test 3: A model of Opinion formation}
For this second test,  we turn our attention to the following opinion model^^>\cite{FurioliPulvirentiTerraneoToscani2017} where $v\in[-1,1]$:
\be
\left\lbrace\begin{aligned}\label{Opinion}
  &\frac{\partial}{\partial t} g=\frac{\lambda}{2}\partial_{vv} \left((1-v^2)g\right)+\partial_v((v-m)g):=L_{Op}g,\\
  &g(0,v) = g^0(v).
\end{aligned}\right.
\ee
In this case, the unknown $g(t,v)$ describes the distribution of individuals at time $t$ with an opinion trait $v$. The modelling parameters are chosen so that $|m|<1$ and $\lambda<1+|m|$ to prevent blow-ups as $v\rightarrow\pm 1$. In addition, we supplement this equation with a no flux boundary condition to ensure the conservation of mass:
\be
  \frac{\lambda}{2}\partial_{v} \left((1-v^2)g\right)+(v-m)g\Big|_{v=\pm 1} = 0.
\ee
In particular, one can show that this condition in fact boils down to the homogeneous Dirichlet boundary condition $g(t,\pm 1)=0$. This equation also admits a steady state given by 
\be\label{OpinionEquilibrium}
  g_\infty(v) = c_{m,\lambda}(1+v)^{\frac{1+m}{\lambda}-1}(1-v)^{\frac{1-m}{\lambda}-1}
\ee
where the constant $c_{m,\lambda}$ is for normalization. In order to approximate solutions to \eqref{Opinion} we want to consider general Jacobi polynomials $\left(p^{\alpha,\beta}_k\right)_{k=0,\dots,N}$ associated to the weight $\omega(v)=(1-v)^\alpha(1+v)^\beta$. However, this polynomial basis does not satisfy the homogeneous Dirichlet boundary conditions. To solve this issue, we construct a new polynomial basis obtained from the original Jacobi polynomials (see \textit{e.g.}^^>\cite{ShenTangWang2011}). For $\alpha,\beta >-1$, let
\be
  \zeta_k = p^{\alpha,\beta}_k + a_kp^{\alpha,\beta}_{k+1} + b_kp^{\alpha,\beta}_{k+2},\quad k=0,\dots,N-2
\ee
where the constants $a_k$ and $b_k$ are solution to the linear system:
\be
\left\lbrace\begin{aligned}
  \zeta_k(-1) &= 0\\
  \zeta_k(1) &= 0.
\end{aligned}\right.
\ee
This new polynomial family now satisfies the boundary conditions but it is, by construction, no longer orthogonal. Since orthogonality is a key ingredient of the conservative spectral method, we need to apply a Gram-Schmidt algorithm to obtain an orthonormal basis. Note that in practice the process of orthonormalization may induce an accumulation of machine errors that can lead to a significant loss in orthogonality for large number of modes. We can now expand $g$ in the basis of polynomials $\left(\zeta_k\right)_{k=0,\dots,N-2}$:
\be
  g(t,v) = \sum_{k=0}^{N-2} \hat{g}_k(t)\zeta_k(v),
\ee
and then proceed as before to solve \eqref{Opinion} using a Galerkin method. We define the standard and constrained problems as
\begin{equation}\label{eq:galerkinOpinion}
    \left\lbrace\begin{aligned}
    &\frac{\partial}{\partial t}g_N = \mathcal{P}_N\left(L_{Op}(g_N)\right),\\
    &g_N(0,v)=\mathcal{P}_N(g^0)(v),
\end{aligned}\right.
\end{equation}
and
\begin{equation}\label{eq:galerkinOpinionc}
    \left\lbrace\begin{aligned}
    &\frac{\partial}{\partial t}g_N^c = L_{Op,N}^c(g_N^c),\\
    &g_N(0,v)=\mathcal{P}_N^c(g^0)(v).
\end{aligned}\right.
\end{equation}
Since \eqref{Opinion} preserves only the first moment, we define $L_{Op,N}^c$ as the solution to
\begin{equation}\label{eq:constrOperatorOpinion}
    L_{Op,N}^c(g) = {\rm argmin} \left\{\| g_N - L_{Op}(g) \|^2_{L^2_\omega}\, :\,g_N\in S_N,\,\, \langle g_N,1 \rangle=0\right\}.
\end{equation}
The time steeping is again achieved using an RK4 time integrator with time step $\Delta t =10^{-4}$. Before presenting numerical results, let us mention that the space spanned by the newly constructed basis does not contain the monomials $v^q$, $q=0,1,2,\dots$ anymore. As a consequence, in regards of \eqref{eq:MomProjError_weight}, even if one considers Legendre polynomials ($\omega=1$) to construct the new basis, one does not expect exact conservation of moments. However, as will be shown through numerical experiments, one still recovers spectral accuracy on the moments for the standard method which ensures spectral accuracy on the constrained solution. In regards of this discussion, we now present only the choice of Legendre polynomials for the construction of the new basis $\left(\zeta_k\right)_{k=0,\dots,N-2}$. Note however that classical families such as Chebyshev first and second kind provide similar results. As an initial data, we take 
\begin{equation}
    g^0(v) = c_0\left((1+v)^{12}(1-v)^6 + (1+v)^{13}(1-v)^{25}\right),
\end{equation}
where $c_0$ is for normalizing the first moment. Then we set $m=0$ and $\lambda=0.1$ in the definition of $L_{Op}$ in \eqref{Opinion}, therefore expecting an equilibrium of the form
\begin{equation*}
    g_\infty(v) = c_\infty(1-v^2)^9.
\end{equation*}
Figure \ref{SnapOpinion} shows the evolution of the approximate distributions $g_N$ and $g_N^c$. As in the previous case, we observe a very good agreement between the standard and constrained method with the two curves overlapping. Moreover, the solutions also appear to converge towards the steady state.
\begin{figure}
  \centering
   \includegraphics[width=.9\linewidth]{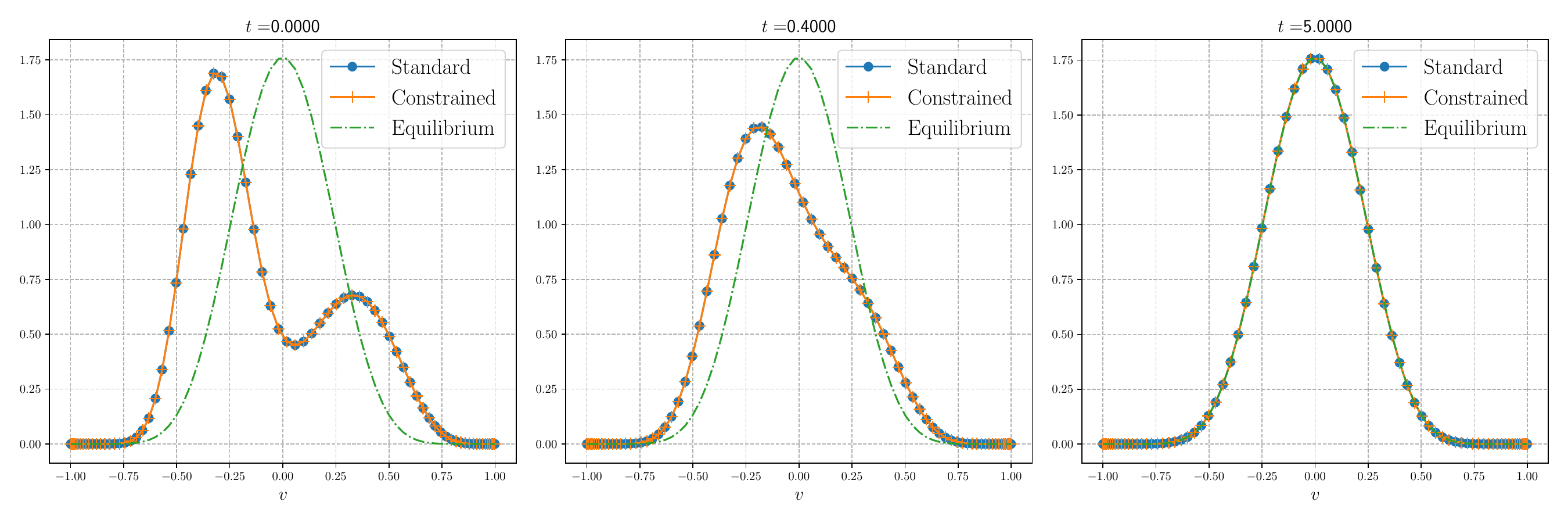}
  \label{SnapOpinion}
  \caption{Opinion model: Snapshots of the distribution at times $t=0,0.4$ and $5$, $N=24$.}
\end{figure}

\paragraph{Spectral accuracy and conservations} Let us now investigate the accuracy of the method and its mass conservation properties. Indeed, at the continuous level, equation \eqref{Opinion} along with no flux boundary conditions preserves only the $0th$ order moment. In the next study, we set a final time $T_f=0.1$ and compare the approximation error between the standard an constrained method. We observe in Figure \ref{ErrMomOpinion} that we again fall into the framework of the convergence theorem \ref{thm:SpectralAccuracyfNc} and the constrained method behaves as predicted.

\begin{figure}
  \centering
   \includegraphics[width=.48\linewidth]{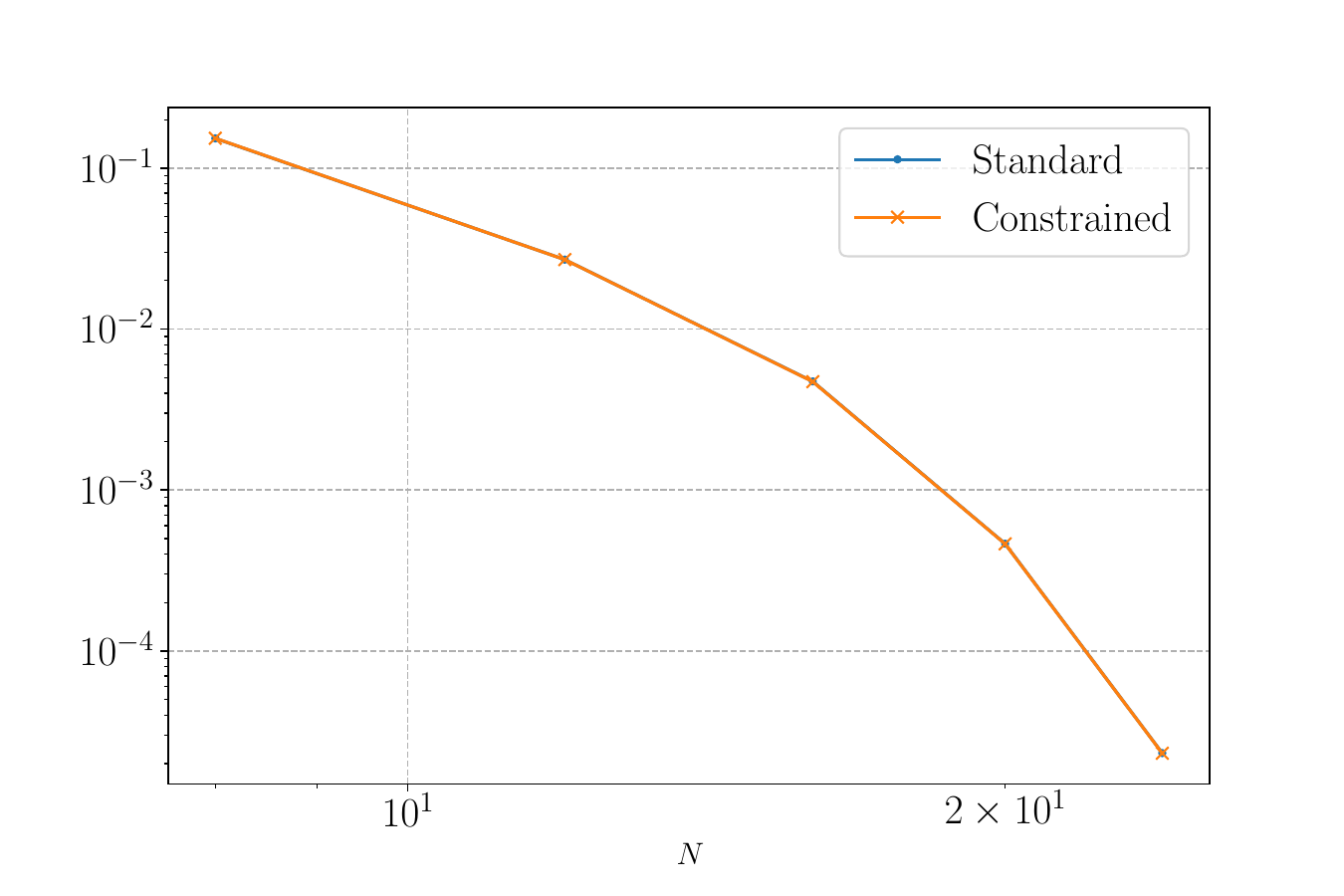}
   \includegraphics[width=.48\linewidth]{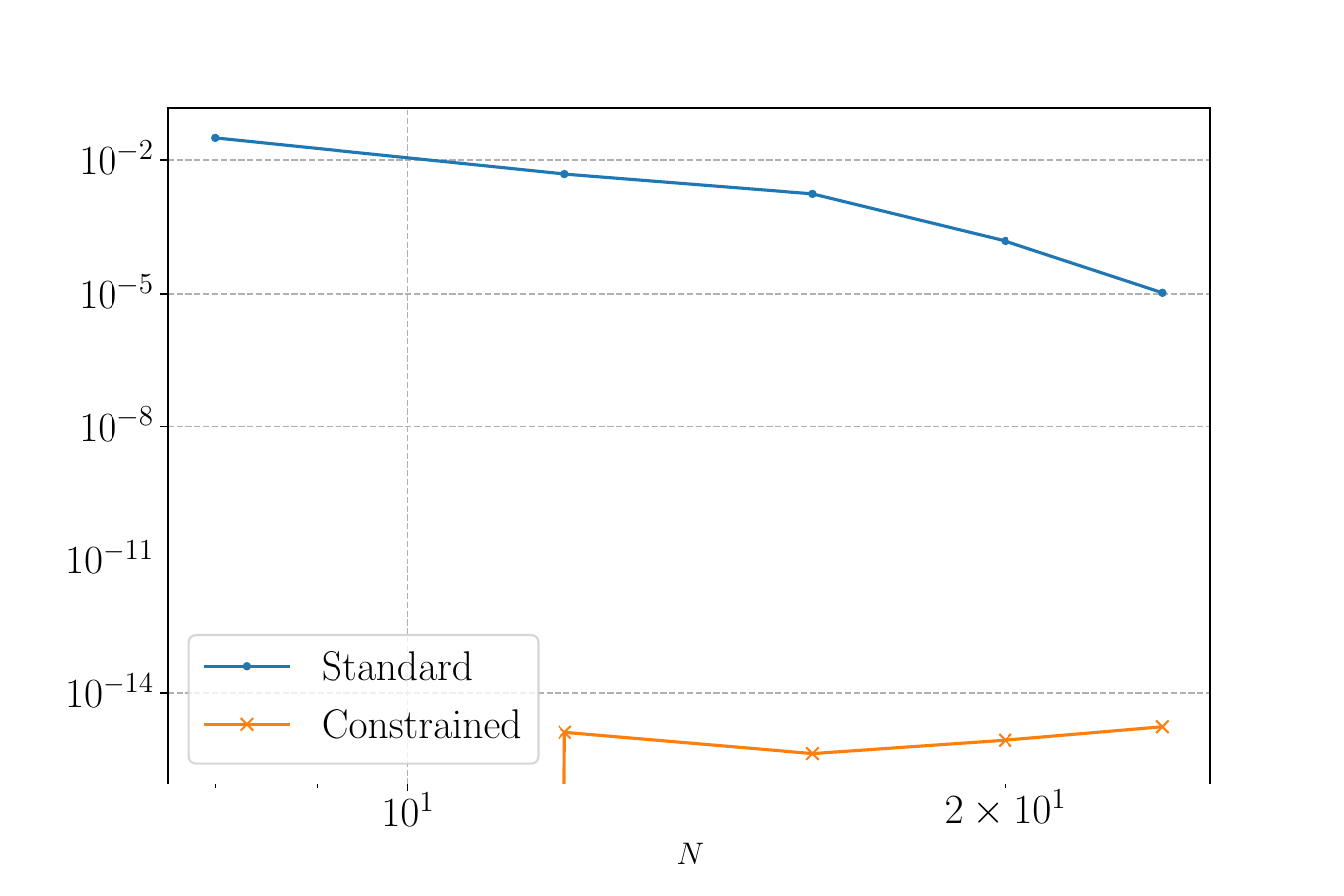}
  \label{ErrMomOpinion}
  \caption{Opinion model: Approximation error on the solution (Left) and on the 0\textsuperscript{th} order moment (Right) at $T_f=0.1$.}
\end{figure}

\paragraph{Long-time behaviour} Let us conclude this test by looking at the long time behaviour of the approximated solution to \eqref{Opinion}. In order to quantify the observation made on Figure \ref{SnapOpinion}, we show in Figure \ref{LTB_Opinion} the evolution of the norms $\|g_N-g_\infty\|_2$ and $\|g_N^c-g_\infty\|_2$ as well as the evolution of the mass variation $|m_0^0-m_0(t^n)|$. Similarly as before, we observe that the conservation of moments allows to stabilize the long time dynamic of the solution.

\begin{figure}
  \centering
   \includegraphics[width=.48\linewidth]{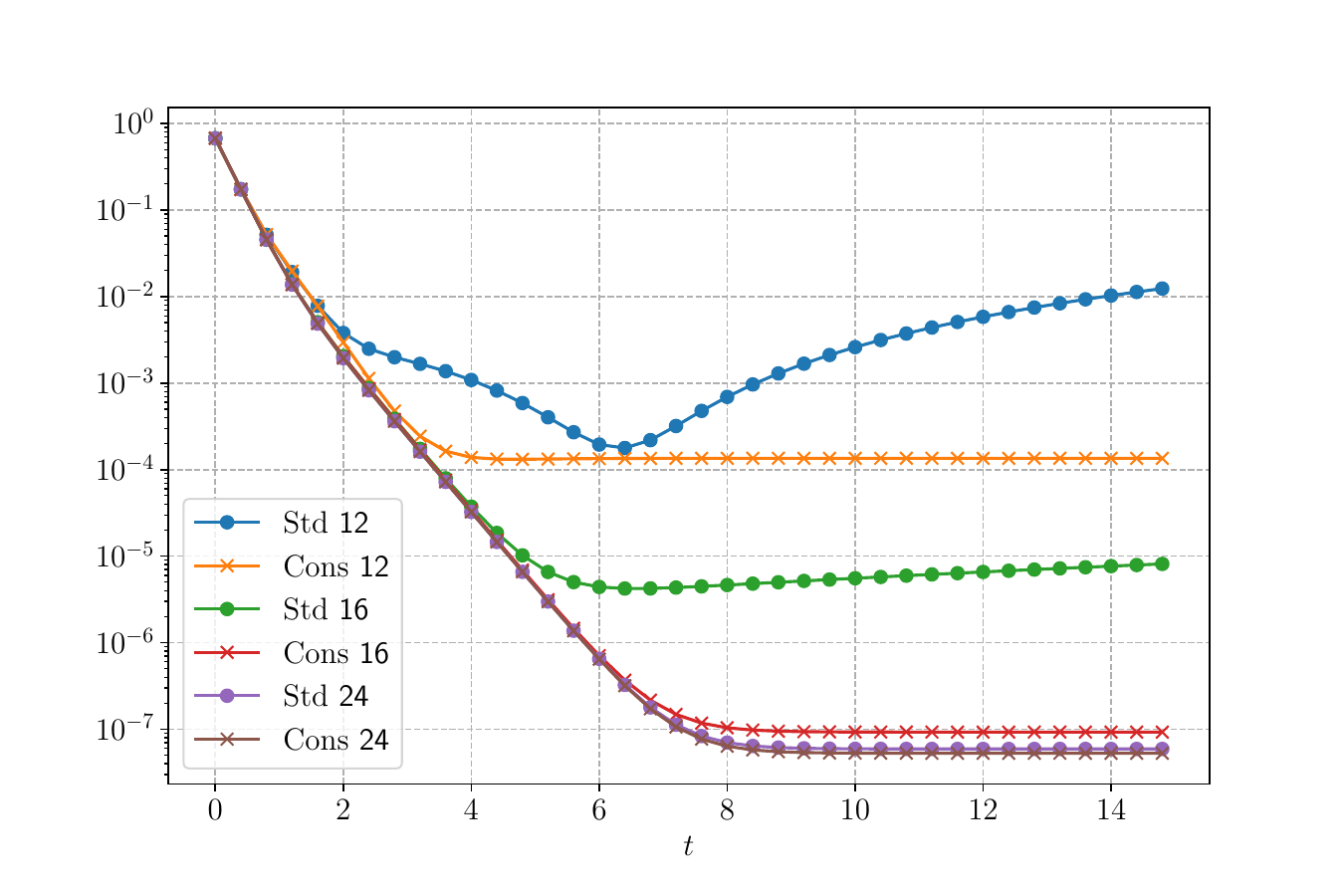}
   \includegraphics[width=.48\linewidth]{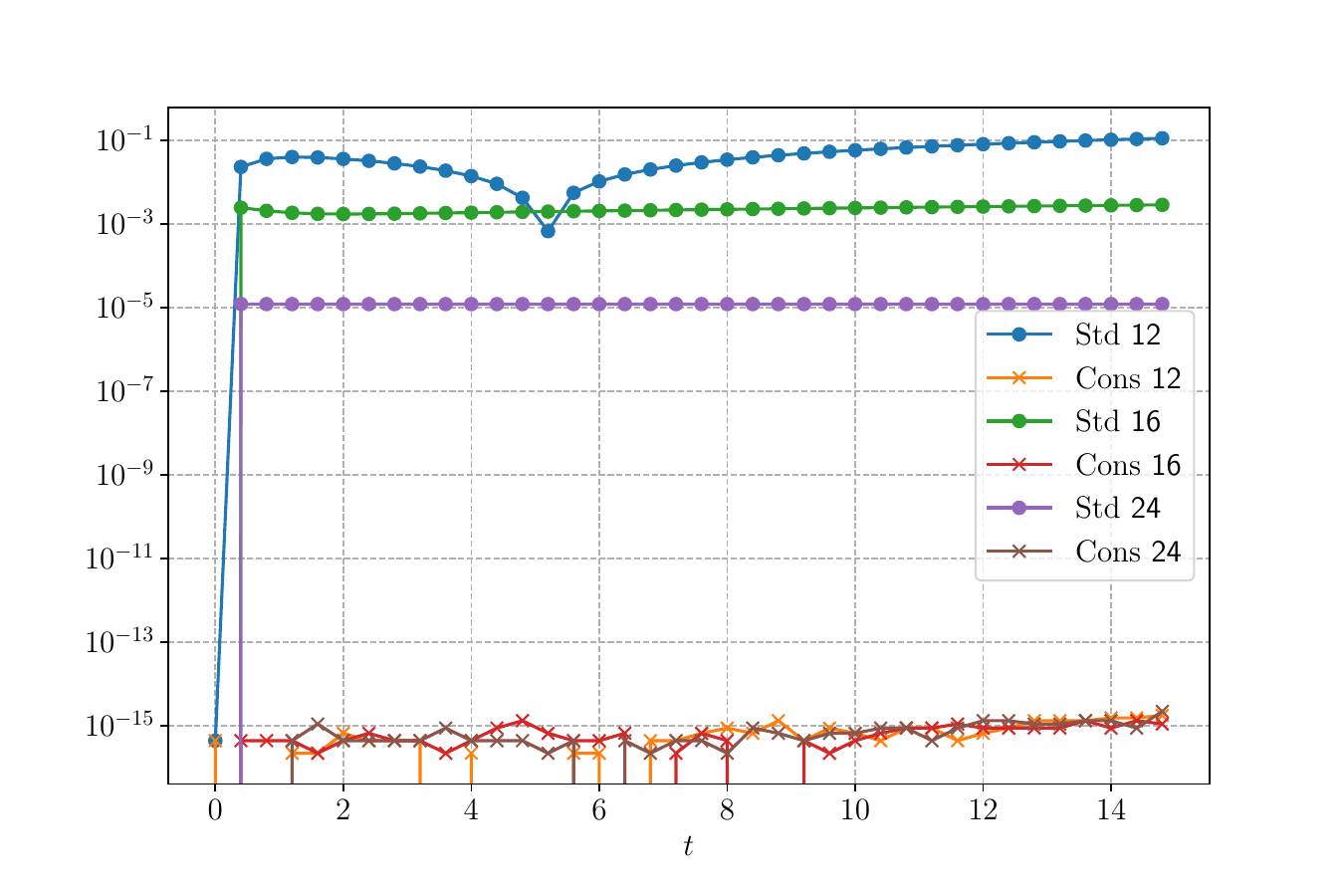}
  \label{LTB_Opinion}
  \caption{Opinion model: Convergence towards the steady state (Left) and mass variation (Right).}
\end{figure}


\subsection{Test 4: A Call center service time model}
This final test deals with a call center service time model introduced in^^>\cite{GualandiToscani2018}:
\be\label{CallCenter}
\left\lbrace\begin{aligned}
  &\frac{\partial}{\partial t} h=\frac{\lambda}{2}\partial_{vv} \left(v^2h\right)+\frac{\gamma}{2}\partial_v\left(v\ln\left(\frac{v}{v_L}\right)h\right) := L_{CC} h,\\
  &h(0,v)=h^0(v).
\end{aligned}\right.
\ee
In this context, the distribution density $h(t,v)$ describes the service time of agents in a call center. The constants $0<\gamma<1$, $\lambda>0$ and $v_L>0$ are modelling parameters. In particular, the latter corresponds to an ideal time for agents to complete their task. To supplement this equation, we consider a no-flux boundary condition
\be
    \frac{\lambda}{2}\partial_{v} \left(v^2h\right)+\frac{\gamma}{2}\ln\left(\frac{v}{v_L}\right)h\Big|_{v=0}=0.
\ee
It is clear that this condition is automatically satisfied by \eqref{CallCenter}.
It can also be shown that \eqref{CallCenter} admits an equilibrium distribution of the form:
\be\label{CallCenterEquilibrium}
  h_\infty(v) = \frac{1}{\sqrt{2\pi\sigma}v}\exp\left(-\frac{(\ln(v)-\mu)^2}{2\sigma}\right)
\ee
where $\sigma=\frac{\lambda}{\gamma}$ and $\mu=\ln(v_L)-\sigma$. As in Test 1, in order to deal with integrability at infinity, we want to consider the symmetrically weighted Laguerre functions: $\xi_k=\mathcal{L}_k\omega^\frac{1}{2}$. In addition, to deal with the poor approximation of the steady state observed in Figure \ref{fig:ConvLaguerreBad} we consider a micro-macro approach. Since the equilibrium is known, we can decompose the unknown as
\begin{equation}\label{CallCenterMiMa}
    h = h_\infty + \Tilde{h},
\end{equation}
where $\Tilde{h}$ is the perturbation that can take negative values. Since \eqref{CallCenter} is linear in $h$, and by definition of the steady state, the perturbation also solves \eqref{CallCenter}. The only difference is the initial data.
\begin{equation*}
\left\lbrace\begin{aligned}
  &\frac{\partial}{\partial t} \Tilde{h}=\frac{\lambda}{2}\partial_{vv} \left(v^2\Tilde{h}\right)+\frac{\gamma}{2}\partial_v\left(v\ln\left(\frac{v}{v_L}\right)\Tilde{h}\right),\\
  &\Tilde{h}(0,v)=h^0(v)-h_\infty(v).
\end{aligned}\right.
\end{equation*}
From this procedure, we expect to observe that $\Tilde{h}$ converges towards $0$ for long time. Consequently, the long time behaviour of the solution should be better approximated since $0$ now belongs to the approximation space. However, one cannot say anything about the short time behaviour as the initial data could still be poorly approximated.

As previously, we can then define the standard and constrained problems on the perturbation as
\begin{equation}\label{eq:galerkinCallCenter}
    \left\lbrace\begin{aligned}
    &\frac{\partial}{\partial t}\Tilde{h}_N = \mathcal{P}_N\left(L_{CC}(\Tilde{h}_N)\right),\\
    &\Tilde{h}_N(0,v)=\mathcal{P}_N(\Tilde{h}^0-h_\infty)(v),
\end{aligned}\right.
\end{equation}
and
\begin{equation}\label{eq:galerkinCallCenterc}
    \left\lbrace\begin{aligned}
    &\frac{\partial}{\partial t}\Tilde{h}_N^c = L_{CC,N}^c(\Tilde{h}_N^c),\\
    &\Tilde{h}_N(0,v)=\mathcal{P}_N^c(\Tilde{h}^0-h_\infty)(v).
\end{aligned}\right.
\end{equation}
The original distribution is then reconstructed using \eqref{CallCenterMiMa} for visualisations. Since \eqref{CallCenter} preserves only the first moment, we define $L_{CC,N}^c$ as the solution to
\begin{equation}\label{eq:constrOperatorCallCenter}
    L_{CC,N}^c(h) = {\rm argmin} \left\{\| g_N - L_{CC}(h) \|^2_{L^2_\omega}\, :\,g_N\in S_N,\,\, \langle g_N,1 \rangle=0\right\}.
\end{equation}
The time stepping is yet again achieved through a RK4 method with time step $\Delta t=10^{-4}$. Let us now fix the parameters $\lambda=0.5$, $\gamma=0.9$ and $v_L=40$ in \eqref{CallCenter}. As an initial data, we consider
\be
    h^0(v) = (x^3-2x+\sin(x))e^{-x}.
\ee
This function is actually well approximated by Laguerre functions. We show on Figure \ref{SnapCallCenter} the evolution of the standard and constrained distributions. A first observation is that there is again a good agreement between the two mathod. In addition, as expected from the micro-macro approach, the steady state is well approximated.
\begin{figure}
  \centering
  \includegraphics[width=.9\linewidth]{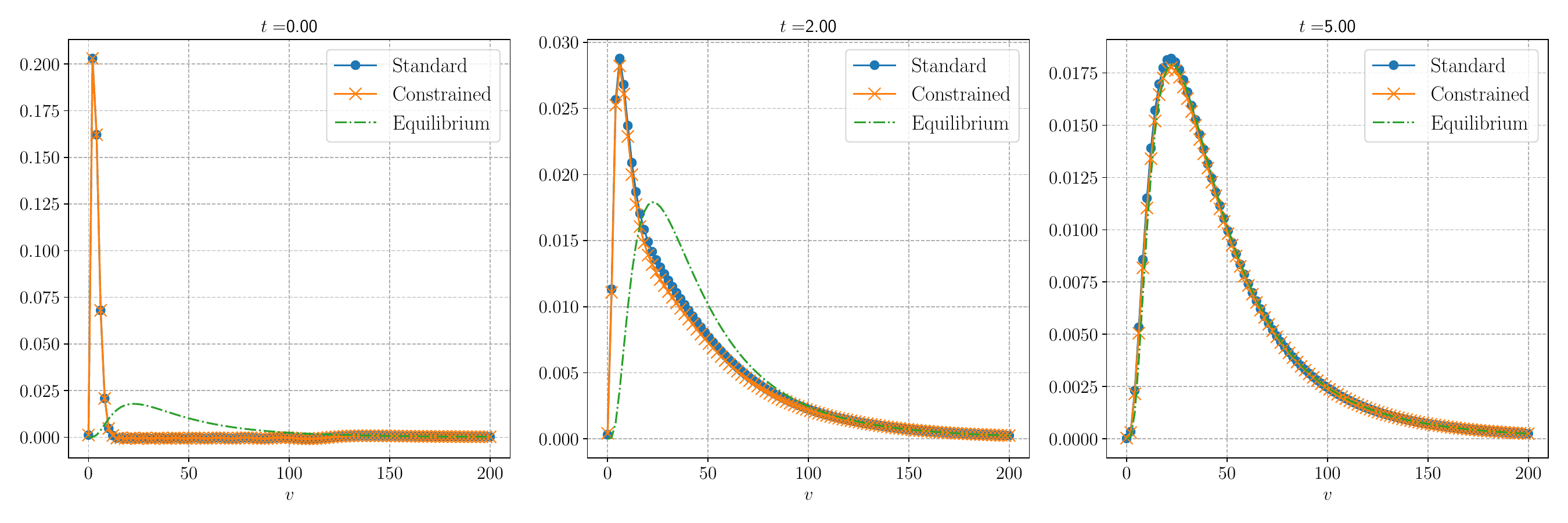}
  \label{SnapCallCenter}
  \caption{Call Center model: Snapshots of the distributions at times $t=0,2$ and $5$, $N=32$.}
\end{figure}

\paragraph{Accuracy and conservation}
To assess the spectral accuracy of the method, we consider a final time $T_f=0.1$ along with a time step $\Delta t=10^{-4}$ for the time integration. We present on Figure \ref{ErrMomCallCenter} the study of the convergence of the method. As expected, in short times, the distribution is not very well approximated in our chosen basis. As a consequence, we are only able to recover a convergence that is quite slow. However, the up-side is that the mass is still exactly preserved by the constrained approximation. Now, contrary to previous tests, it is important to note that the standard method is not spectrally accurate on the mass.

\begin{figure}
  \centering
  \includegraphics[width=.48\linewidth]{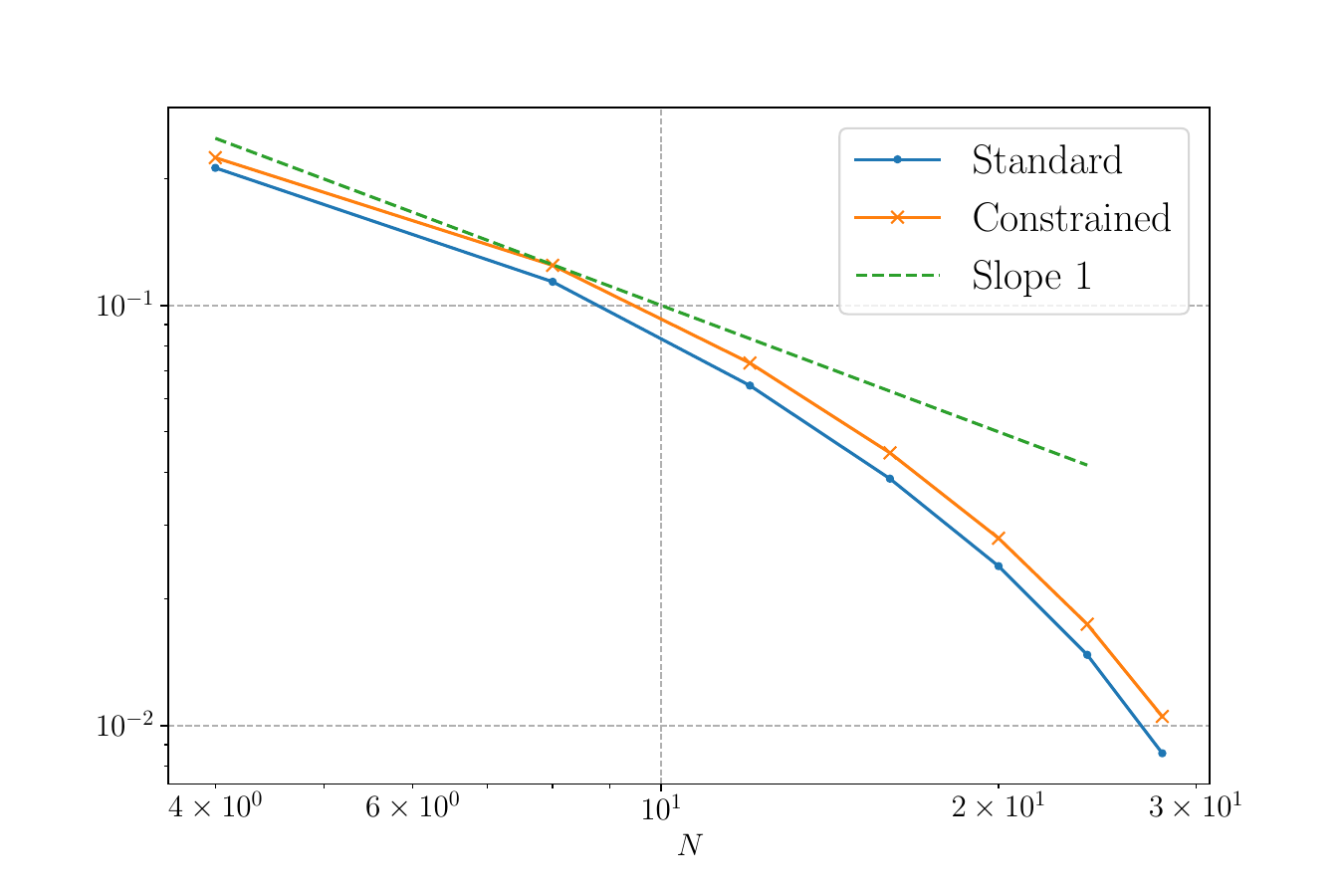}
  \includegraphics[width=.48\linewidth]{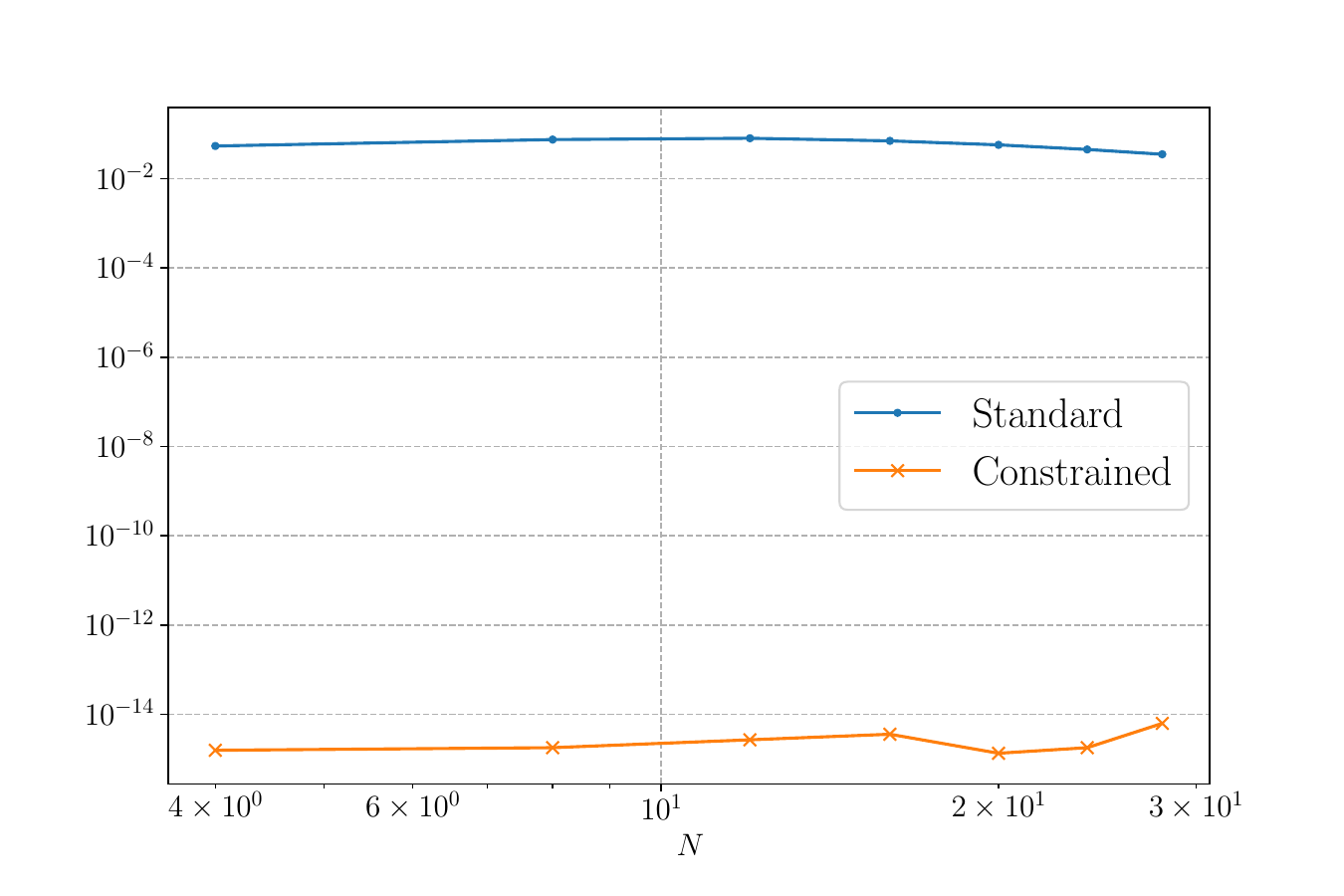}
  \label{ErrMomCallCenter}
  \caption{Call Center model: Approximation error on the solution (Left) and on the 0\textsuperscript{th} order moment (Right) at $T_f=0.1$.}
\end{figure}

\paragraph{Long-time behaviour} To conclude this section, and as in previous tests, we investigate the longtime behaviour of the solution through the norms $\|h_n-h_\infty\|_2$ and $\|h_n^c-h_\infty\|_2$. We observe in Figure \ref{LTB_CallCenter} that the micro-macro approach allows to approximate really well the steady state with both methods. In addition, the mass variation induced by the standard approach decays exponentially fast towards $0$ therefore ensuring that it also converges towards $h_\infty$ albeit more slowly than the conservative approach.

\begin{figure}
  \centering
  \includegraphics[width=.48\linewidth]{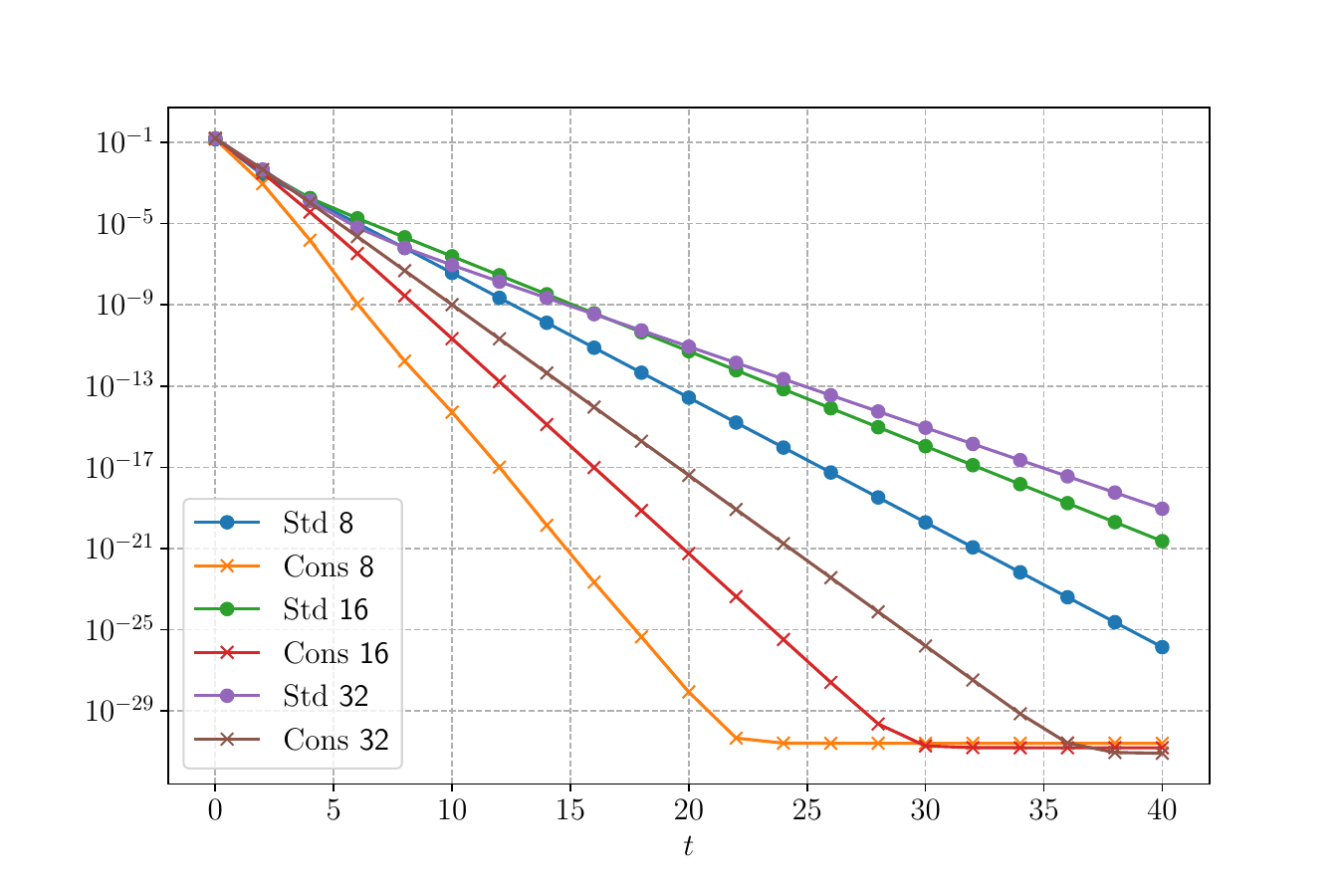}
  \includegraphics[width=.48\linewidth]{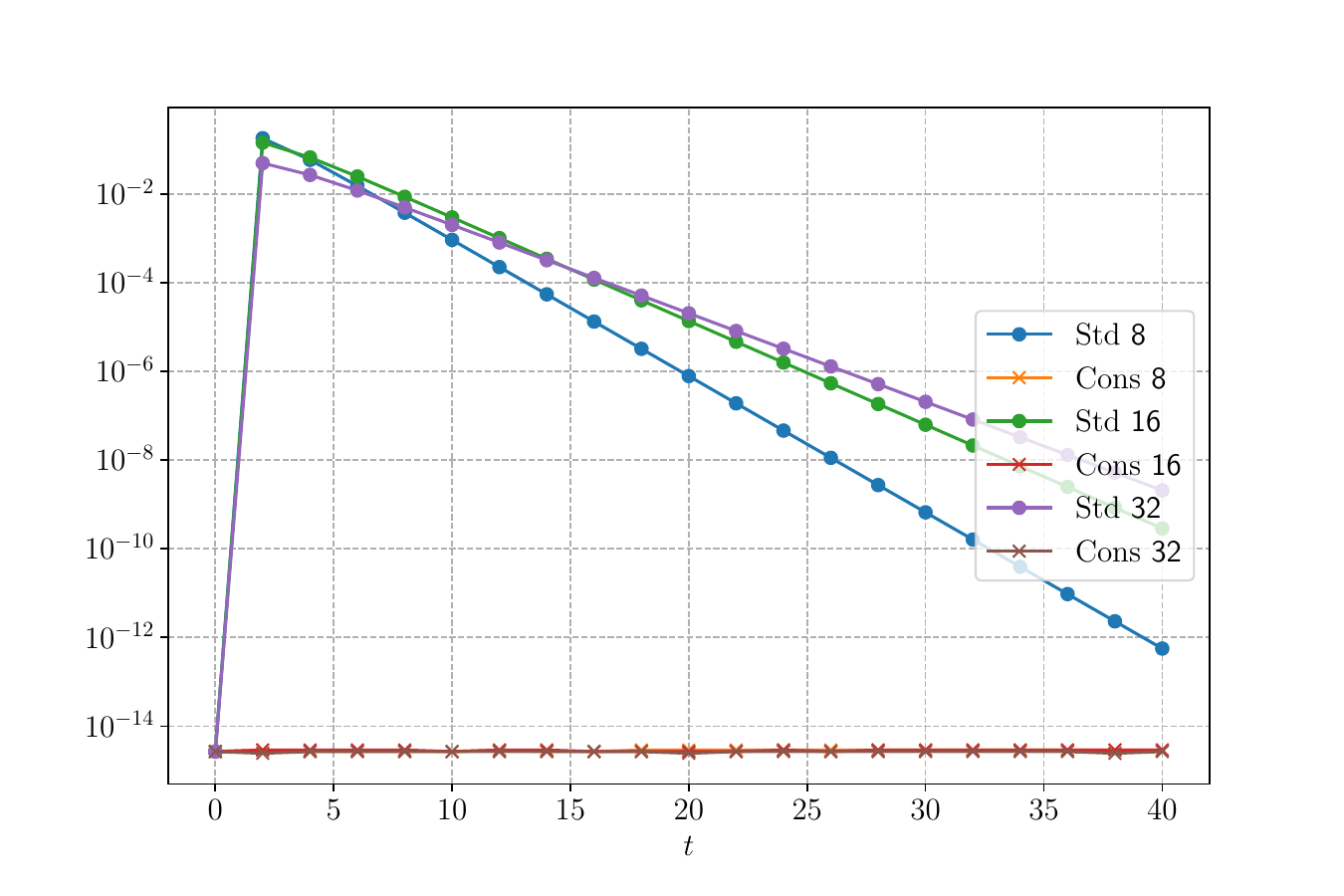}
  \label{LTB_CallCenter}
  \caption{Call Center model: Convergence towards the steady state (Left), and mass variation (Right).}
\end{figure}

\section{Conclusions}
We extended and analyzed a class of Galerkin spectral methods for PDEs based on general families of orthogonal functions that can preserve the moments of the solution. Leading examples of such problems arise in kinetic and mean-field theory where preservation of the moments of the distribution function is of paramount importance to describe correctly the long-time behavior. The methods were designed using an $L_2$ best constrained approximation formalism in the space of orthogonal polynomials. Thanks to this setting, the resulting methods allow the spectral accuracy property of the solution to be maintained for the conservative polynomial series. The new approximation was then used to derive moment preserving methods for several Fokker-Planck equations in distinct physical domains and by adopting different standard Galerkin projections, including the use of micro-macro decompositions.

Given its generality, the method admits numerous extensions. Among the most interesting are certainly the construction of spectrally accurate and conservative methods for the Vlasov equation. Another interesting direction is the application to kinetic models in the socio-economic domain where equilibrium states are often unknown and thanks to the present approach can be computed with spectral accuracy.

\medskip
\paragraph{Acknowledgment}
This work has been written within the activities of GNCS and GNFM groups of INdAM (Italian National Institute of High Mathematics). LP has been supported by the Royal Society under the Wolfson Fellowship ``Uncertainty quantification, data-driven simulations and learning of multiscale complex systems governed by PDEs". The partial support by ICSC -- Centro Nazionale di Ricerca in High Performance Computing, Big Data and Quantum Computing, funded by European Union -- NextGenerationEU and by MUR PRIN 2022 Project No. 2022KKJP4X ``Advanced numerical methods for time dependent parametric partial differential equations with applications" is also acknowledged. T.L. would like to thanks Thomas Rey for fruitful discussions on the implementation of the method. T.L. acknowledges support from Labex CEMPI (ANR-11-LABX-0007-01) and the MSCA DN-2022 program DATAHYKING.


\bibliographystyle{acm}
\bibliography{Orthpolycons}

\end{document}